\providecommand{\U}[1]{\protect\rule{.1in}{.1in}}
\newtheorem{theorem}{Theorem}[section]
\theoremstyle{plain}
\newtheorem{corollary}[theorem]{Corollary}
\newtheorem{lemma}[theorem]{Lemma}
\newtheorem{proposition}[theorem]{Proposition}
\theoremstyle{definition}
\newtheorem{definition}[theorem]{Definition}
\newtheorem{remark}{Remark}
\newtheorem{example}{Example}
\numberwithin{equation}{section}
\begin{document}
\title{Boundaries of Baumslag-Solitar Groups}
\author{Craig R. Guilbault}
\address{Department of Mathematical Sciences\\
University of Wisconsin-Milwaukee, Milwaukee, WI 53201}
\email{craigg@uwm.edu}
\author{Molly A. Moran}
\address{Department of Mathematics, The Colorado College, Colorado Springs, Colorado 80903}
\email{mmoran@coloradocollege.edu}
\author{Carrie J. Tirel}
\address{Department of Mathematics\\
University of Wisconsin-Fox Valley, Menasha, WI 54952}
\email{carrie.tirel@uwc.edu}
\thanks{This research was supported in part by Simons Foundation Grants 207264 and
427244, CRG}
\date{August 16, 2018}
\keywords{$\mathcal{Z}$-structure, $\mathcal{Z}$-boundary, Baumslag-Solitar Groups,
Group Boundary}

\begin{abstract}
A $\mathcal{Z}$-structure on a group $G$ was introduced by Bestvina in order
to extend the notion of a group boundary beyond the realm of CAT(0) and
hyperbolic groups. A refinement of this notion, introduced by Farrell and
Lafont, includes a $G$-equivariance requirement, and is known as an
$\mathcal{EZ}$-structure. The general questions of which groups admit
$\mathcal{Z}$- or $\mathcal{EZ}$-structures remain open. In this paper we add
to the current knowledge by showing that all Baumslag-Solitar groups admit
$\mathcal{EZ}$-structures and all generalized Baumslag-Solitar groups admit
$\mathcal{Z}$-structures.

\end{abstract}
\maketitle

\section{Introduction\label{Section: Introduction}}

In \cite{Bes96}, Bestvina introduced the concept of a $\mathcal{Z}$-structure
on a group $G$ to provide an axiomatic treatment of group boundaries. Roughly
speaking, the definition requires $G$ to act geometrically (properly,
cocompactly, by isometries) on a \textquotedblleft nice\textquotedblright%
\ space $X$ and for that space to admit a nice compactification $\overline{X}$
(a $\mathcal{Z}$-compactification). In addition, it is required that
translates of compact subsets of $X$ get small in $\overline{X}$ --- a property
called the \emph{nullity condition}. Adding visual boundaries to CAT(0) spaces
and Gromov boundaries to appropriately chosen Rips complexes provide the model
examples. To admit a $\mathcal{Z}$-structure, it is necessary that a group $G$ admits a finite $K(G,1)$ complex (a \emph{Type F} group).  Bestvina posed the still open question as to whether or not every Type F group admits a $\mathcal{Z}$-structure.

In \cite{Bes96}, the Baumslag-Solitar group $BS\left(  1,2\right)  $ was put
forward as a non-hyperbolic, non-CAT(0) group that, nevertheless, admits a
$\mathcal{Z}$-structure. Baumslag-Solitar groups $BS\left(  1,n\right)  $
behave similarly, but from the beginning, the status of general
Baumslag-Solitar groups $BS\left(  m,n\right)  $ was unclear. In this paper we
resolve that issue in a strong way.
%The nullity condition is often the roadblock for many groups and thus the list of known examples is relatively short.

\begin{theorem}
Every generalized Baumslag-Solitar group admits a $\mathcal{Z}$-structure.
\end{theorem}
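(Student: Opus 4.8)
The plan is to produce, for an arbitrary generalized Baumslag-Solitar group $G$, an explicit contractible space $X$ on which $G$ acts geometrically, together with a $\mathcal{Z}$-compactification $\overline{X}=X\cup Z$ satisfying the nullity condition. First I would record the defining structure: $G$ is the fundamental group of a finite graph of groups in which every vertex and edge group is infinite cyclic, and each edge inclusion $\mathbb{Z}\hookrightarrow\mathbb{Z}$ is multiplication by a nonzero integer. Realizing vertices by circles, edges by cylinders $S^1\times[0,1]$, and edge inclusions by covering maps of circles yields a finite aspherical $2$-complex $Y$ with $\pi_1(Y)\cong G$; in particular $G$ is a Type F group, which is the necessary precondition for any $\mathcal{Z}$-structure. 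I set $X=\widetilde{Y}$, a locally finite, contractible, finite-dimensional, locally compact $2$-complex on which $G$ acts freely, properly, and cocompactly by cell-permuting homeomorphisms, hence (after metrizing) geometrically.

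The geometry of $X$ is governed by the equivariant projection $p\colon X\to T$ onto the Bass-Serre tree. Because every edge group has finite index in the adjacent vertex groups, $T$ is locally finite; the preimage of each vertex of $T$ is a line, the preimage of each edge is a Euclidean strip $\mathbb{R}\times[0,1]$, and $X$ is in this sense a ``tree of lines.'' The heart of the argument is the construction of $\overline{X}$. I would compactify $T$ by its space of ends to get $\overline{T}=T\cup\partial T$, a metrizable $\mathcal{Z}$-compactification of the tree, and then build $Z$ out of two types of ideal points: the ``horizontal'' points of $\partial T$, approached by rays that cross infinitely many strands, and the ``vertical'' ends of the individual lines $p^{-1}(v)$. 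Concretely I would extend $p$ to a map $\overline{p}\colon\overline{X}\to\overline{T}$, topologize $\overline{X}$ by prescribing a neighborhood basis at each ideal point, and check that $\overline{X}$ is a compact metrizable space containing $X$ as a dense open subset.

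With $\overline{X}$ in hand, the remaining axioms should follow from the explicit model. Contractibility, finite dimension, local compactness, and the ANR property for $X$ are immediate from its cell structure, and $\overline{X}$ is compact and metrizable by construction. To see that $Z$ is a $\mathcal{Z}$-set I would exhibit a homotopy $H\colon\overline{X}\times[0,1]\to\overline{X}$ that is the identity at time $0$, pushes all of $\overline{X}$ into $X$ for $t>0$, and whose tracks become uniformly small near $Z$; this can be assembled from the fiberwise contractions of the strips toward a chosen section together with a retraction of $\overline{T}$ toward a basepoint of $T$. For the nullity condition I would use that elements of $G$ act on $T$ either elliptically or hyperbolically, so that for a fixed compact $K\subset X$ any escaping sequence $g_iK$ is eventually driven either deep into $T$ (hence close to $\partial T$) or far along a line (hence close to a vertical end); either way all but finitely many translates become $\mathcal{U}$-small for any given open cover $\mathcal{U}$ of $\overline{X}$.

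I expect the decisive difficulty to be the compactification step and, within it, the compatibility of the horizontal and vertical directions at infinity. Declaring a topology on $\overline{X}$ is easy; the real work is proving simultaneously that it is compact, Hausdorff, and metrizable and that $Z$ is a genuine $\mathcal{Z}$-set, since a ray that drifts toward an end of $T$ while wandering up and down the strands it crosses must be assigned a well-defined limit, and the instability at infinity inherent in the non-CAT(0), non-hyperbolic nature of $G$ (already visible for $BS(1,2)$) is exactly what makes the $\mathcal{Z}$-set homotopy delicate to control. Once the neighborhood system is calibrated so that vertical motion within a strand is ``invisible'' relative to horizontal progress toward $\partial T$, the $\mathcal{Z}$-set homotopy and the nullity estimate should fall out, completing the construction; note that this produces a $\mathcal{Z}$-structure but not obviously an $\mathcal{EZ}$-structure, consistent with the stated theorem.
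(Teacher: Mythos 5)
Your proposal identifies the right space (the ``tree of lines'' $\mathbb{R}\times T$ fibering over the Bass-Serre tree) but it has a genuine gap exactly where you suspect it: the nullity condition, and no calibration of neighborhoods of the kind you describe will make your dichotomy argument work as stated. The problem is the exponential horizontal stretching of the action: if $\left\vert m\right\vert \neq n$, a translate $t^{b}R_{0}$ of the fundamental tile sits at tree-depth $b$ but has horizontal extent of order $(n/|m|)^{b}$. So a sequence of translates ``driven deep into $T$'' does \emph{not} become small: in the visual (CAT(0)) compactification these tiles limit onto an entire quarter-circle of the boundary (the paper notes this explicitly), and in a product-type compactification $[-\infty,\infty]\times\overline{T}$ their $\mathbb{R}$-shadow stays macroscopic. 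Your proposed fix---making ``vertical motion within a strand invisible relative to horizontal progress toward $\partial T$,'' i.e., taking neighborhoods of $\xi\in\partial T$ to be full preimages of half-trees---repairs those translates but breaks the other family: the translates $s^{j}R_{0}$, which stay at bounded tree-depth while escaping along a line, then accumulate on a nondegenerate arc of vertical ends (one end per point of an edge of $T$), so their diameters do not shrink. Collapsing those vertical ends to remedy this forces all of them to a single suspension point by connectedness of the strips, returning you to the visual boundary $S^{0}\ast\partial_{\infty}T$, where the $t$-translates fail again. In short, the two natural topologies each kill one family of translates and resurrect the other; an actual new idea is needed, and your sketch does not contain it.

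The paper's device is to keep the CAT(0) space, its $\ell^{2}$ metric, and its visual compactification entirely unchanged (so that compact AR, $\mathcal{Z}$-set, and properness come for free from the CAT(0) theory) and instead \emph{conjugate the action} by the compression $f(x,y)=(\operatorname{sgn}(x)\log(\log(|x|+e)),y)$. After conjugation a tile at depth $b$ has horizontal extent roughly $\log b$, so the angle it subtends at the origin is of order $(\log b)/b\rightarrow 0$; this is the content of the paper's key angle estimate (Lemma \ref{Theorem: Scaling Function Property} and Corollary \ref{Corollary: the angle lemma}), which is then converted into the nullity condition via the covering lemma (Lemma \ref{Lemma: Cover of Boundary}) and the single-compact-set reduction (Lemma \ref{Lemma: Nullity Condition Check}), with the isometry requirement handled by remetrizing (Proposition \ref{Propostion:Geometric Action}). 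Also note the paper's high-level architecture differs from yours: it does not compactify each generalized Baumslag-Solitar group directly, but invokes Whyte's quasi-isometry classification (Theorem \ref{Theorem: Whyte Classification}) and boundary swapping (Proposition \ref{Boundary Swapping Theorem}) to reduce everything to the ordinary groups $BS(m,n)$ (plus the CAT(0) case $\mathbb{Z}\times F_{n}$). Your direct route would additionally saddle you with proving compactness, metrizability, the AR property, and the $\mathcal{Z}$-set homotopy for a bespoke compactification---checks the paper avoids---while still lacking the quantitative compression estimate that is the actual heart of the proof.
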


A generalized Baumslag-Solitar group is the fundamental group of a graph of
groups with vertex and edge groups $\mathbb{Z}$. By applying work of Whyte
\cite{Why01} and a boundary swapping trick (see \cite{Bes96} and
\cite{GuMo18}), it will suffice to show that the actual Baumslag-Solitar
groups $BS(m,n)$ admit $\mathcal{Z}$-structures. For those groups, we will
prove the following stronger theorem.

\begin{theorem}
[$\mathcal{EZ}$-Structures on Baumslag-Solitar Groups]%
\label{Theorem: Boundary BS(m,n)} All Baumslag-Solitar groups, $BS(m,n)$,
admit $\mathcal{EZ}$-structures.
\end{theorem}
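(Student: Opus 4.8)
The plan is to build an explicit $\mathcal{EZ}$-structure by exploiting the HNN structure $BS(m,n)=\langle a,t\mid ta^mt^{-1}=a^n\rangle$, which presents the group as an HNN extension of $\langle a\rangle\cong\mathbb{Z}$ over the subgroups $\langle a^m\rangle$ and $\langle a^n\rangle$. First I would fix the associated Bass--Serre tree $T$, on which $BS(m,n)$ acts cocompactly with $\mathbb{Z}$ vertex and edge stabilizers; here $T$ is $(|m|,|n|)$-biregular and carries a $BS(m,n)$-equivariant height function $h\colon T\to\mathbb{R}$ coming from the homomorphism $BS(m,n)\to\mathbb{Z}$, $a\mapsto 0$, $t\mapsto 1$. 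I would then take as model space the universal cover $X$ of the (aspherical) presentation $2$-complex, viewed geometrically as a \emph{tree of lines}: there is a proper $BS(m,n)$-equivariant projection $p\colon X\to T$ whose fiber over each vertex is a line $\cong\mathbb{R}$ (a copy of the Cayley graph of a conjugate of $\langle a\rangle$) and whose edge-strips record the $|m|$- and $|n|$-fold wrapping dictated by the indices. Since the presentation is aspherical, $X$ is contractible, locally compact, and finite dimensional, and $BS(m,n)$ acts on it geometrically; this supplies the underlying space of the structure.

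Next I would construct the compactification $\overline{X}$. The boundary $Z=\overline{X}\setminus X$ should be assembled from two kinds of limit points: the space of ends $\partial T$ of the tree (a Cantor set when $|m|,|n|\geq 2$), reached by sequences whose $p$-images escape to infinity in $T$, and \emph{fiberwise} endpoints reached by sequences that escape to infinity along a single line (the $\langle a\rangle$-direction) while staying in a bounded part of $T$. Concretely I would first compactify $T$ to $\overline{T}=T\sqcup\partial T$ and each line-fiber by its two endpoints, then topologize $\overline{X}$ so that the $p$-image governs convergence to $\partial T$ and the fiber coordinate governs convergence to line-ends, checking compatibility along the edge-strips where fibers merge. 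The bulk of the work is to verify that $\overline{X}$ is a compact, metrizable, finite-dimensional \emph{ER} (equivalently a compact AR), which I would do by a local-contractibility analysis at boundary points, using the product-like local structure of the tree of lines away from the branch locus.

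Finally I would verify the two remaining axioms and the equivariance upgrade. To see that $Z$ is a $\mathcal{Z}$-set I would produce an \emph{instant} homotopy $\overline{X}\times[0,1]\to\overline{X}$ that is the identity at time $0$ and pushes all of $\overline{X}$ into the open subset $X$ for positive times, obtained by flowing inward simultaneously along the tree (toward a basepoint) and along the fibers. For the \emph{nullity condition} I would show that, for a fixed compact $C\subseteq X$, any infinite family of translates $\{gC\}$ must escape either up or down the tree or out along fibers; in the first case the images $p(gC)$ cluster near $\partial T$ and in the second case $gC$ clusters near a line-end, so every such translate is eventually $\mathcal{U}$-small for any prescribed open cover $\mathcal{U}$ of $\overline{X}$. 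Equivariance is then essentially built in: because the $BS(m,n)$-action on $X$ is induced from its isometric action on $T$ together with an affine action on the fibers, and $\overline{X}$ is topologized naturally from these data, the action extends continuously over $Z$, yielding an $\mathcal{EZ}$-structure.

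I expect the principal obstacle to be the compactification step, and specifically the interaction of the boundary topology with the \emph{merging of fibers}. The inequality $|m|\neq|n|$ is exactly what prevents $BS(m,n)$ from being CAT(0) or hyperbolic and forces the $\langle a\rangle$-lines to combine non-isometrically as one moves up the tree; near this branch locus the convenient local product structure degenerates, so proving that $\overline{X}$ is an ANR and that $Z$ is a genuine $\mathcal{Z}$-set there --- rather than merely a set-theoretic boundary --- is where the real analysis lies, and it is what distinguishes these groups from the already-understood cases $BS(1,n)$.
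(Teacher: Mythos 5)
Your overall framework (the tree-of-lines space $\mathbb{R}\times T$, the projection to the Bass--Serre tree, asphericity giving a contractible model) matches the paper's setup, but the two steps you yourself flag as the "real analysis" contain gaps that are fatal as stated. First, the compactification: a boundary consisting only of $\partial T$ together with fiberwise endpoints, topologized so that ``the $p$-image governs convergence to $\partial T$ and the fiber coordinate governs convergence to line-ends,'' cannot be Hausdorff. If every sequence whose tree-coordinate tends to $\xi\in\partial T$ is to converge to $\xi$ regardless of fiber coordinate, then every open set containing $\xi$ must contain a full tube $p^{-1}(V)$ over some neighborhood $V$ of $\xi$ (otherwise pick $(x_j,y_j)\in p^{-1}(V_j)\setminus U$ with $V_j\downarrow\xi$ to get a sequence converging to $\xi$ that avoids $U$). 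But then for a fixed vertex $y\in V$ the sequence $(k,y)$ lies in that tube and also converges to the positive fiber-end $\mathbf{R}$, so $\xi$ and $\mathbf{R}$ admit no disjoint neighborhoods. Any metrizable compactification of this space is therefore forced to contain intermediate ``diagonal'' limit points interpolating between the fiber-ends and $\partial T$ --- which is exactly why the paper's boundary is the suspension $S^{0}\ast\partial_{\infty}T(|m|,n)$, i.e., the visual boundary of the CAT(0) space $\mathbb{R}\times T(|m|,n)$, and not the smaller set you describe.

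Second, once those suspension arcs are present, your nullity argument breaks at precisely the point the paper identifies as the crux. Under the natural $BS(m,n)$-action, the translates $t^{k}C$ of a compact fundamental domain sit at height $k$ with $\mathbb{R}$-extent on the order of $(n/|m|)^{k}$, so they do not get small: they accumulate onto an \emph{entire arc} of the boundary (in the paper's words, translates of $R_{0}$ by powers of $t$ ``limit out on the entire quarter circle'' of the preferred positive sheet). Your inference ``$gC$ clusters near a line-end, so every such translate is eventually $\mathcal{U}$-small'' conflates accumulation at the boundary with shrinking diameter in $\overline{X}$; the former holds, the latter fails whenever $|m|\neq n$. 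The paper's actual content is the repair of this failure: it keeps the visual compactification but \emph{changes the action}, conjugating by the compression $f(x,y)=(\operatorname{sgn}(x)\log(\log(|x|+e)),y)$ and then proving, via explicit angle estimates (Lemma \ref{Theorem: Scaling Function Property} and Corollary \ref{Corollary: the angle lemma}), that far-away tiles subtend arbitrarily small visual angle at the origin, which yields nullity. Your plan has no analogue of this compression, and without one the approach fails. Relatedly, equivariance is not ``essentially built in'': after conjugation the action no longer carries geodesic rays to rays, so extending each generator over the boundary requires the asymptotic-slope computations of Proposition \ref{Proposition: Extension of action to boundary (m>0)}, and for $m<0$ one needs reflected suspensions because $t$ reverses the $\mathbb{R}$-factor.
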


Here $\mathcal{EZ}$ stands for \textquotedblleft equivariant $\mathcal{Z}%
$-structure\textquotedblright, a $\mathcal{Z}$-structure in which the group
action extends to the boundary. Torsion-free groups (which includes all groups
studied in this paper) that admit $\mathcal{EZ}$-structures are known to
satisfy the Novikov conjecture (\cite{FaLa05}). That is one reason to aim for
this stronger condition.

\section{Background \label{Section: Background}}

\subsection{Visual Boundaries of CAT(0) Spaces}

In this section, we review the definition of CAT(0) spaces and the visual
boundary as we will use these as a starting point for $\mathcal{EZ}%
$-structures on $BS(m,n)$. For a more thorough treatment of CAT(0) spaces, see
\cite{BrHa99}.

\begin{definition}
A geodesic metric space $(X,d)$ is a \emph{CAT(0) space} if all of its
geodesic triangles are no fatter than their corresponding Euclidean comparison
triangles. That is, if $\Delta(p,q,r)$ is any geodesic triangle in $X$ and
$\overline{\Delta}(\overline{p},\overline{q},\overline{r})$ is its comparison
triangle in $\mathbb{E}^{2}$, then for any $x,y\in\Delta$ and the comparison
points $\overline{x},\overline{y}\in \overline{\Delta}$, then $d(x,y)\leq d_{\mathbb{E}}%
(\overline{x},\overline{y})$.
\end{definition}

\begin{example}
Basic examples of CAT(0) spaces include:

\begin{itemize}
\item $%
%TCIMACRO{\U{211d} }%
%BeginExpansion
\mathbb{R}
%EndExpansion
^{n}$ equipped with the Euclidean metric is a CAT(0) space as all geodesic
triangles are already Euclidean and hence no fatter than their comparison triangles.

\item A tree, $T$, is a CAT(0) space since all geodesic triangles are
degenerate and thus have no thickness associated to them.

\item If $X$ and $Y$ are CAT(0), spaces, then $X\times Y$ with the $\ell^{2}$
metric is CAT(0). So, for example, $%
%TCIMACRO{\U{211d} }%
%BeginExpansion
\mathbb{R}
%EndExpansion
\mathbb{\times}T$ is a CAT(0) space---a fact that will play a significant role
in this paper.
\end{itemize}
\end{example}

A group $G$ that acts properly, cocompactly, and by isometries (also known as
a \emph{geometric} group action) on a proper CAT(0) space is called a
\emph{CAT(0) group}.

\begin{definition}
The \emph{boundary} of a proper CAT(0) space $X$, denoted $\partial X$, is the
set of equivalence classes of rays, where two rays are equivalent if and only
if they are asymptotic. We say that two geodesic rays $\alpha, \alpha^{\prime
}:[0,\infty)\to X$ are \emph{asymptotic} if there is some constant $k$ such
that $d(\alpha(t),\alpha^{\prime}(t))\leq k$ for every $t\geq0$.
\end{definition}

If we fix a base point $x_{0}\in X$, each equivalence class of rays in $X$
contains exactly one representative emanating from $x_{0}$. So when $x_{0}$ is
chosen, we can view $\partial X$ as the set of all rays in $X$ based at
$x_{0}$. We may endow $\overline{X}=X\cup\partial X$, with the \emph{cone
topology}, described below, under which $\partial X$ is a closed subspace of
$\overline{X}$ and $\overline{X}$ compact (provided $X$ is proper). Equipped
with the topology induced by the cone topology on $\overline{X}$, the boundary
is called the \emph{visual boundary} of $X$; we will denote it by
$\partial_{\infty}X$.

%In what follows, the term `boundary' will always mean `visual boundary'. Furthermore, we will slightly abuse terminology and call the cone topology restricted to $\partial X$ simply the cone topology if it is clear that we are only interested in the topology on $\partial X$.

The cone topology on $\overline{X}$, denoted $\mathcal{T}(x_{0})$ for
$x_{0}\in X$, is generated by the basis $\mathcal{B}=\mathcal{B}_{0}%
\cup\mathcal{B}_{\infty}$ where $\mathcal{B}_{0}$ consists of all open balls
$B(x,r)\subset X$ and $\mathcal{B}_{\infty}$ is the collection of all sets of
the form
\[
U(c,r,\epsilon)=\{x\in\overline{X}\mid d(x,c(0))>r\text{ and }d(p_{r}%
(x),c(r))<\epsilon\}
\]
where $c:[0,\infty)\rightarrow X$ is any geodesic ray based at $x_{0}$, $r>0$,
$\epsilon>0$, and $p_{r}$ is the natural projection of $\overline{X}$ onto
$\overline{B}(c(0),r)$.

\begin{example}
\label{Example: CAT(0) boundaries}Boundaries of the simple examples given
above are:

\begin{itemize}
\item $\partial_{\infty}%
%TCIMACRO{\U{211d} }%
%BeginExpansion
\mathbb{R}
%EndExpansion
^{n}\simeq S^{n-1}$

\item $\partial_{\infty}T$ is compact and $0$-dimensional. If each vertex has
degree $\geq3$, it is a Cantor set $C$. (In order for $T$ to be proper, assume
all vertices have finite degree.)

\item If $X$ and $Y$ are CAT(0), spaces and $X\times Y$ is given the $\ell
^{2}$ metric, then $\partial_{\infty}(X\times Y)\simeq\partial X\ast\partial
Y$, the (spherical) join of the two boundaries. For example, $\partial
_{\infty}(%
%TCIMACRO{\U{211d} }%
%BeginExpansion
\mathbb{R}
%EndExpansion
\mathbb{\times}T)$ is homeomorphic to $S^{0}\ast\partial_{\infty}T$; the
suspension of a $0$-dimensional set (usually a Cantor set)..
\end{itemize}
\end{example}

When $G$ is a CAT(0) group acting geometrically on a proper CAT(0) space $X$,
we call $\partial_{\infty}X$ a CAT(0) boundary for $G$. For example, since
$\mathbb{Z}^{n}$ acts geometrically on $%
%TCIMACRO{\U{211d} }%
%BeginExpansion
\mathbb{R}
%EndExpansion
^{n}$, it is a CAT(0) group and $S^{n-1}$ is a CAT(0) boundary . The free
group on two generators, $F_{2}$, acts geometrically on a four-valent tree, so
a CAT(0) boundary for $F_{2}$ is the Cantor set.

The following lemma, which is reminiscent of the Lebesgue covering lemma, will
be useful in proving our main theorem.

\begin{lemma}
\label{Lemma: Cover of Boundary} Let $(X,d)$ be a proper CAT(0) space and let
$\mathcal{U}$ be an open cover of $\overline{X}$. Then there exists a
$\delta>0$ so that for every $z\in\partial X$, $U(z, \frac{1}{\delta},
\delta)$ lies in an element of $\mathcal{U}$.
\end{lemma}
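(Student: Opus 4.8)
The plan is to prove this by contradiction, extracting a convergent sequence of boundary points whose associated neighborhoods are forced to be small, and using properness (hence compactness of $\overline{X}$) to derive the contradiction. Specifically, suppose no such $\delta$ exists. Then for each $n \in \mathbb{N}$ (taking $\delta = \frac{1}{n}$), there is a point $z_n \in \partial X$ such that $U(z_n, n, \frac{1}{n})$ is not contained in any single element of $\mathcal{U}$. Since $\overline{X}$ is compact, the sequence $(z_n)$ has a subsequence converging to some point $z_\infty \in \overline{X}$; because $\partial X$ is closed in $\overline{X}$, we have $z_\infty \in \partial X$. After passing to this subsequence (and relabeling), we have $z_n \to z_\infty$.

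Next I would exploit that $\mathcal{U}$ covers $\overline{X}$, so there is some $U_0 \in \mathcal{U}$ with $z_\infty \in U_0$. Since $U_0$ is open and $z_\infty \in \partial X$, the basis description of the cone topology gives a basic neighborhood $U(z_\infty, r_0, \epsilon_0) \subseteq U_0$ for suitable $r_0, \epsilon_0 > 0$. The crux is then to show that for all sufficiently large $n$, the ``bad'' neighborhood $U(z_n, n, \frac{1}{n})$ is actually contained in $U(z_\infty, r_0, \epsilon_0)$, contradicting the choice of $z_n$. Intuitively, as $n \to \infty$ the sets $U(z_n, n, \frac{1}{n})$ shrink down around the ray $z_n$ (they require distance $> n$ from the basepoint and projection within $\frac{1}{n}$ of $c_{z_n}(n)$), and since $z_n \to z_\infty$ the rays representing $z_n$ converge uniformly on compact sets to the ray representing $z_\infty$. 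So eventually these thin neighborhoods are swallowed by the fixed neighborhood $U(z_\infty, r_0, \epsilon_0)$.

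The main obstacle, and the step demanding real care, is the last containment $U(z_n, n, \tfrac{1}{n}) \subseteq U(z_\infty, r_0, \epsilon_0)$. This requires a careful comparison of the two defining conditions in the cone topology. I would fix $x \in U(z_n, n, \frac{1}{n})$; such $x$ satisfies $d(x, x_0) > n$, so certainly $d(x, x_0) > r_0$ once $n > r_0$. For the projection condition, I would use that projection onto the closed ball $\overline{B}(x_0, r_0)$ is distance-nonincreasing in CAT(0) spaces, together with the convexity of the metric, to estimate $d(p_{r_0}(x), c_{z_\infty}(r_0))$. The key geometric inputs are that geodesics in a CAT(0) space vary continuously with their endpoints, that the projection maps $p_r$ are well-behaved (nearest-point projections onto convex sets), and that the rays $c_{z_n}$ track $c_{z_\infty}$ closely on the relevant compact range once $z_n$ is near $z_\infty$. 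Assembling these estimates so that the projection of $x$ lands within $\epsilon_0$ of $c_{z_\infty}(r_0)$ for all large $n$ is the technical heart of the argument.

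Once the containment is established, it directly contradicts the assumption that $U(z_n, n, \frac{1}{n})$ lies in no element of $\mathcal{U}$, since $U(z_\infty, r_0, \epsilon_0) \subseteq U_0 \in \mathcal{U}$. This completes the proof. I would expect the write-up to lean on standard facts about nearest-point projections and geodesic convergence in proper CAT(0) spaces (as in \cite{BrHa99}), invoking them rather than reproving them, so that the argument stays focused on the neighborhood-containment estimate.
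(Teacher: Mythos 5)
Your proof is correct, but it takes a genuinely different route from the paper's. The paper argues in the style of the classical extremal-function proof of the Lebesgue covering lemma: it extracts a finite subcover $\{U_1,\dots,U_k\}$ of the compact set $\partial X$, defines for each $i$ the function $\eta_i(z)=\sup\{\epsilon \mid U(z,\tfrac{1}{\epsilon},\epsilon)\subseteq U_i\}$, asserts that each $\eta_i$ is continuous and positive exactly when $z\in U_i$, and takes $\delta$ to be half the (positive) minimum of $\eta=\max_i\eta_i$ over the compact boundary. You instead run the sequential-compactness/contradiction version: bad points $z_n$ with $U(z_n,n,\tfrac{1}{n})$ in no element of $\mathcal{U}$, a limit $z_\infty\in\partial X$, a basic neighborhood $U(z_\infty,r_0,\epsilon_0)\subseteq U_0\in\mathcal{U}$, and the containment $U(z_n,n,\tfrac{1}{n})\subseteq U(z_\infty,r_0,\epsilon_0)$ for large $n$. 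The step you flag as the technical heart does go through, and in fact more easily than your sketch suggests: for $n>r_0$ one has the tower property $p_{r_0}=p_{r_0}\circ p_n$ on $\overline{X}\setminus B(x_0,n)$ (the geodesic or ray from $x_0$ to $x$ passes through $p_n(x)$), so for $x\in U(z_n,n,\tfrac{1}{n})$, the $1$-Lipschitz property of nearest-point projection gives $d(p_{r_0}(x),c_{z_n}(r_0))\leq d(p_n(x),c_{z_n}(n))<\tfrac{1}{n}$; since $z_n\to z_\infty$ means the representative rays converge uniformly on compact sets, $d(c_{z_n}(r_0),c_{z_\infty}(r_0))\to 0$, and the triangle inequality yields $d(p_{r_0}(x),c_{z_\infty}(r_0))<\epsilon_0$ for large $n$ (with $d(x,x_0)>n>r_0$ handling the radial condition). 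No convexity-of-the-metric estimate or endpoint-continuity of geodesics is actually needed beyond these two standard facts. It is worth noting that the two proofs hide comparable work: the paper's one-line claim that $\eta_i$ is continuous requires essentially the same neighborhood-nesting estimate that you isolate as your containment step. What each approach buys: the paper's version is shorter and avoids subsequence bookkeeping; yours makes the geometric mechanism explicit and rests only on low-level facts about projections in proper CAT(0) spaces, at the cost of an $\epsilon$-chase.
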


\begin{proof}
Since $\partial X$ is compact, there is a finite subcollection $\{U_{1},
U_{2},..., U_{k}\}$ of $\mathcal{U}$ that covers $\partial X$. For each
$i\in\{1,2,...,k\}$, define a function $\eta_{i}:\partial X\to[0,\infty)$ by
$\eta_{i}(z)=\text{sup}\{\epsilon\, |\,U(z,\frac{1}{\epsilon}, \epsilon
)\subseteq U_{i}\}$. Note that $\eta_{i}$ is continuous and $\eta_{i}(z)>0$ if
and only if $z\in U_{i}$. Thus, $\eta:\partial X\to[0,\infty)$ defined by
$\eta(z)=\text{max}\{\eta_{i}(z)\}_{i=1}^{k}$ is continuous and strictly
positive. Let $\delta^{\prime}$ be the minimum value of $\eta$ and set
$\delta=\frac{\delta^{\prime}}{2}$.
\end{proof}

%We will see in Section 3 that the Baumslag-Solitar groups $BS(m,n)$ for $m\neq n$ act properly and compactly on $T\times \mathbb{R}$ but not isometrically.

\subsection{$\mathcal{Z}$-Structures\label{Subsection: Z-structures}}

Boundaries of CAT(0) groups have proven to be useful objects that can help us
gain more information about the groups themselves. This led Bestvina to
generalize the notion of group boundaries by defining \textquotedblleft%
$\mathcal{Z}$-boundaries\textquotedblright\ for groups, a topic that we
explore now. For more on $\mathcal{Z}$-structures, see \cite{Bes96} and
\cite{GuMo18}.

\begin{definition}
A closed subset $A$ of a space $X$, is a \emph{$\mathcal{Z}$-set} if there
exists a homotopy $H:X\times\lbrack0,1]\rightarrow X$ such that $H_{0}%
=\operatorname*{id}_{X}$ and $H_{t}(X)\subset X-A$ for every $t>0$.
\end{definition}

\begin{example}
The prototypical $\mathcal{Z}$-set is the boundary of a manifold, or any
closed subset of that boundary.
\end{example}

A \emph{\boldmath$\mathcal{Z}$-compactification} of a space $X$ is a
compactification $\overline{X}$ such that $\overline{X}-X$ is a $\mathcal{Z}%
$-set in $\overline{X}$.

\begin{example}
The addition of the visual boundary to a proper CAT(0) space $X$ gives a
$\mathcal{Z}$-compactification $\overline{X}$ of $X$. A simple way to see the
visual boundary as a $\mathcal{Z}$-set in $\overline{X}$ is to imagine the
homotopy that ``reels" points of the boundary in along the geodesic rays.
\end{example}

\begin{definition}
\label{Definition: Z-structure}A\emph{ }$\mathcal{Z}$\emph{-structure }on a
group $G$ is a pair of spaces $(\overline{X},Z)$ satisfying the following four conditions:

\begin{enumerate}
\item $\overline{X}$ is a compact AR,

\item $Z$ is a $\mathcal{Z}$-set in $\overline{X}$,

\item $X=\overline{X}-Z$ is a proper metric space on which $G$ acts
geometrically, and

\item $\overline{X}$ satisfies the following \emph{nullity condition} with
respect to the $G$-action on $X$: for every compact $C\subseteq X$ and any
open cover $\mathcal{U}$ of $\overline{X}$, all but finitely many
$G$-translates of $C$ lie in an element of $\mathcal{U}$.\smallskip
\end{enumerate}

\noindent When this definition is satisfied, $Z$ is called a $\mathcal{Z}%
$\emph{-boundary }for $G$. If only conditions (1)-(3) are satisfied, the
result is called a \emph{weak} $\mathcal{Z}$\emph{-structure}. If, in addition
to (1)-(4)above, the $G$-action on $X$ extends to $\overline{X}$, the result
is called an $\mathcal{EZ}$\emph{-structure} (equivariant) $\mathcal{Z}%
$-structure. \medskip
\end{definition}

\begin{example}
The following are the most common examples of $(\mathcal{E})\mathcal{Z}$-structures:

\begin{enumerate}
\item If $G$ acts geometrically on a proper CAT(0) space $X$, then
$\overline{X}=X\cup\partial_{\infty}X$, with the cone topology, gives an
$\mathcal{EZ}$-structure for $G$.

\item In \cite{BeMe91} it is shown that if $G$ is a hyperbolic group,
$P_{\rho}(G)$ is an appropriately chosen Rips complex, and $\partial G$ is the
Gromov boundary, then $\overline{P}_{\rho}(G)=P_{\rho}(G)\cup\partial G$
(appropriately topologized) gives an $\mathcal{EZ}$-structure for $G$.

\item Osajda and Przytycki \cite{OsPr09} have shown that systolic groups admit
$\mathcal{EZ}$-structures.
\end{enumerate}
\end{example}

Other classes of groups that admit $\mathcal{Z}$-structures have been
addressed by Dahmani \cite{Dah03} (relatively hyperbolic groups), Martin
\cite{Mar14} (complexes of groups), Osajda and Przytycki \cite{OsPr09}
(systolic groups), Tirel \cite{Tir11} (free and direct products), and Pietsch
\cite{Pie18} (semidirect products with $\mathbb{Z}$ and 3-manifold groups).

Most of the Baumslag-Solitar groups $BS(m,n)$ and generalized Baumslag-Solitar
groups do not belong to any of the categories listed above and thus Theorem
\ref{Theorem: Boundary BS(m,n)} adds an interesting new set of examples to
this list.

A few comments are in order regarding the definition of $\mathcal{Z}%
$-structure. First, Bestvina's original definition did not explicitly require
actions by isometries, but only by covering transformations. As we point out
at the end of Section \ref{Subsection: the adjusted action}, there is no loss
of generality in requiring actions by isometries. Bestvina also required
$\overline{X}$ to be finite-dimensional and the action to be free.
Dranishnikov relaxed both of these conditions in \cite{Dra06}, and
\cite{GuMo18} shows that nothing is lost in doing so.

We close this section with a few observations about $\mathcal{Z}$-structures.
The first makes the nullity condition more intuitive; the second is useful for
verifying the nullity condition; and the third can (and will) be used to
obtain $\mathcal{Z}$-structures for a broad class of groups without checking
each group individually.\medskip

Every $\mathcal{Z}$-compactification $\overline{X}$ of a proper metric space
$\left(  X,d\right)  $ is metrizable (see \cite{GuMo18}), but in general,
there is no canonical choice of metric for $\overline{X}$; moreover whichever
metric $\overline{d}$ one chooses will be quite different from $d$.
Nevertheless, any such choice can be used to give the following intuitive
meaning to the nullity condition. The proof is straight-forward general topology.

\begin{lemma}
Let $(\overline{X},Z)$ be a weak $\mathcal{Z}$-structure as described in
Definition \ref{Definition: Z-structure}, and let $\overline{d}$ be a metric
for $\overline{X}$. Then $(\overline{X},Z)$ satisfies the nullity condition
(and hence is a $\mathcal{Z}$-structure) if and only if
\begin{gather*}
\text{(\dag) for any compact set }C\subseteq X\text{ and }\epsilon>0\text{,all
but finitely many }G\text{-translates}\\
\text{ of }C\text{\textit{ have }}\overline{d}\text{-diameter less than
}\epsilon\text{.}%
\end{gather*}

\end{lemma}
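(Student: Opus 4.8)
The plan is to prove the equivalence by unwinding what "lies in an element of $\mathcal{U}$" means in terms of the metric $\overline{d}$, exploiting compactness of $\overline{X}$ to translate between open covers and $\epsilon$-diameters. First I would establish the forward direction, namely that the nullity condition implies (\dag). Given a compact $C\subseteq X$ and $\epsilon>0$, I would cover $\overline{X}$ by all open $\overline{d}$-balls of radius $\epsilon/2$; call this cover $\mathcal{U}_\epsilon$. By the nullity condition, all but finitely many $G$-translates $gC$ lie in a single element of $\mathcal{U}_\epsilon$, i.e.\ inside some ball of radius $\epsilon/2$, and any set contained in such a ball has $\overline{d}$-diameter at most $\epsilon$. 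Hence all but finitely many translates have diameter less than $\epsilon$ (after a harmless adjustment of the radius, e.g.\ using radius $\epsilon/4$ to get strict inequality), giving (\dag).

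For the reverse direction I would assume (\dag) and verify the nullity condition. Fix a compact $C\subseteq X$ and an arbitrary open cover $\mathcal{U}$ of $\overline{X}$. Since $\overline{X}$ is a compact metric space under $\overline{d}$, the Lebesgue number lemma yields a $\lambda>0$ such that every subset of $\overline{X}$ of $\overline{d}$-diameter less than $\lambda$ is contained in some element of $\mathcal{U}$. Applying (\dag) with $\epsilon=\lambda$, all but finitely many $G$-translates of $C$ have $\overline{d}$-diameter less than $\lambda$, and therefore each such translate lies in a single element of $\mathcal{U}$. This is exactly the nullity condition, completing the equivalence.

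The two ingredients that make this work are the metrizability of $\overline{X}$ (quoted from \cite{GuMo18}, so that a metric $\overline{d}$ exists to speak of at all) and the compactness of $\overline{X}$, which supplies the Lebesgue number in the hard direction and lets small balls witness small diameters in the easy direction. I expect the main (though still routine) obstacle to be bookkeeping the quantifiers cleanly: one must be careful that the finite exceptional set of translates in (\dag) is allowed to depend on $\epsilon$, and correspondingly that the exceptional set in the nullity condition may depend on the cover $\mathcal{U}$, so that the two finite sets match up once $\epsilon$ is chosen as the Lebesgue number of $\mathcal{U}$. Since the compactification is metric and compact, no point-set pathology intervenes, and the argument is genuinely "straight-forward general topology" as the statement promises; the proof is essentially the observation that, in a compact metric space, covering-by-small-sets and having-small-diameter are interchangeable conditions via the Lebesgue number lemma.
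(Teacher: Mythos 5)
Your proof is correct, and since the paper omits the argument entirely (remarking only that it is ``straight-forward general topology''), your Lebesgue-number argument --- small balls witnessing small diameters in one direction, the Lebesgue number of the cover in the other --- is precisely the intended proof. Your quantifier bookkeeping (the exceptional finite set depending on $\epsilon$, respectively on $\mathcal{U}$) is handled correctly, so there is nothing to add.
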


The next lemma allows us to verify the nullity condition without checking
every compact subset $C$ of $X$.

\begin{lemma}
\label{Lemma: Nullity Condition Check}Let $X$ be a proper metric space
admitting a proper cocompact action by $G$ and let $\left(  \overline
{X},\overline{d}\right)  $ be a $\mathcal{Z}$-compactification of $X$ If $C$
is a compact subset of $X$ with the property that $GC=X$ and the nullity
condition is satisfied for $C$, then the nullity condition is satisfied for
all compact subsets of $X$.
\end{lemma}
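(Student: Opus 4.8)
The plan is to show that the nullity condition for a single compact set $C$ with $GC = X$ propagates to arbitrary compact sets. The key observation is that any compact set $D \subseteq X$ can be covered by finitely many $G$-translates of $C$, because the translates $\{gC : g \in G\}$ cover $X$ and $D$ is compact. Fix such a finite collection $g_1 C, \dots, g_m C$ with $D \subseteq \bigcup_{j=1}^{m} g_j C$.

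First I would use the convenient metric reformulation of the nullity condition provided by the preceding lemma. Choosing a metric $\overline{d}$ for $\overline{X}$, it suffices to verify condition $(\dag)$ for $D$: given $\epsilon > 0$, all but finitely many $G$-translates $gD$ have $\overline{d}$-diameter less than $\epsilon$. Since $D \subseteq \bigcup_{j=1}^{m} g_j C$, every translate satisfies $gD \subseteq \bigcup_{j=1}^{m} (g g_j) C$.

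Next I would estimate the diameter of $gD$ in terms of the diameters of the translates of $C$ that cover it. The union $\bigcup_{j=1}^{m} (g g_j) C$ is a union of $m$ sets, all of which contain the point $g g_1 c_0$ for a fixed $c_0 \in C$ — more precisely, I would first arrange the cover so the translates have a common intersection, or else bound the diameter of a union of overlapping small sets. The cleanest approach: since $C$ is connected-up-to-bounded-gaps is not guaranteed, so instead I would bound $\operatorname{diam}(gD)$ by noting that if each piece $(g g_j)C$ has small $\overline{d}$-diameter and the pieces share points with $gD$, the total diameter stays controlled. By hypothesis, the nullity condition holds for $C$, so by $(\dag)$ for each fixed $j$, only finitely many translates $hC$ have diameter $\geq \epsilon/m$; hence for all but finitely many $g$, each of the finitely many translates $(gg_j)C$ $(j=1,\dots,m)$ has diameter less than $\epsilon/m$. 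Using the triangle inequality across the chain of overlapping pieces (each consecutive pair meeting inside $gD$), the diameter of $gD$ is at most $m \cdot (\epsilon/m) = \epsilon$.

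The main obstacle will be making the diameter estimate for the union rigorous: a union of $m$ sets each of small diameter need not have small diameter unless they overlap appropriately. To handle this I would exploit that $gD$ itself is the object being bounded, so any two of its points lie in $gD \subseteq \bigcup_j (gg_j)C$, and I would pass through intermediate points to chain the estimates. The finiteness bookkeeping is then routine: for each $j \in \{1,\dots,m\}$ the exceptional set of translates $(gg_j)C$ of large diameter is finite, and as $g$ ranges over $G$ this forces only finitely many $g$ to produce a translate $gD$ of diameter $\geq \epsilon$. Applying the metric reformulation lemma in reverse then yields the nullity condition for $D$, and since $D$ was arbitrary, for all compact subsets of $X$.
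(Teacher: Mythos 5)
Your proposal follows essentially the same route as the paper's proof: cover an arbitrary compact set $K$ by finitely many translates $g_1C,\ldots,g_nC$ (using $GC=X$, compactness of $K$, and properness), observe that $gK\subseteq gg_1C\cup\cdots\cup gg_nC$, invoke the metric reformulation $(\dag)$ of the nullity condition to arrange that all but finitely many translates of $C$ have $\overline{d}$-diameter below $\epsilon/n$, and conclude that all but finitely many translates $gK$ have diameter at most $n\cdot(\epsilon/n)=\epsilon$. Up to notation, the bookkeeping is identical.

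Where you differ is that you explicitly flag the weak point---the diameter of a union of $n$ sets, each of diameter less than $\epsilon/n$, need not be less than $\epsilon$ unless the pieces overlap in a chain---which the paper's proof passes over silently. Your instinct is correct, but the patch you sketch does not close the gap: you assert that the pieces can be chained with ``each consecutive pair meeting inside $gD$,'' and nothing in the setup guarantees this; if $D$ is disconnected, two points of $gD$ may lie in small pieces with no chain of consecutively overlapping pieces joining them. The standard repair rests on two observations absent from your writeup: (i) group elements act by bijections, so the intersection pattern of the covering translates is translation-invariant---if $g_iC\cap g_jC\neq\emptyset$, then $gg_iC\cap gg_jC\neq\emptyset$ for every $g$; and (ii) in the setting where the lemma is applied, $X$ is an AR, hence path-connected, so one may first enlarge $D$ to a compact \emph{connected} set $D'$ by adjoining finitely many arcs and cover $D'$ instead. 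Connectedness of $D'$ then forces any two covering pieces to be joined by a chain of consecutively intersecting pieces (a finite cover of a connected compactum by closed sets admits such chains), and (i) transports those chains to every translate $gD'$, legitimizing the triangle-inequality estimate $n\cdot(\epsilon/n)$. Since the paper's own proof makes the same unexamined leap, your proposal is no less complete than the published argument; with (i) and (ii) added, it becomes airtight.
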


\begin{proof}
Choose $\epsilon>0$ and let $K\subseteq X$ be an arbitrary compact set. By
properness and the hypothesis, there are finitely many translates of $C$ that
cover $K$, that is $K\subseteq g_{1}C\cup g_{2}C\cup...\cup g_{n}C$ for
$g_{i}\in G$. Since $C$ satisfies the nullity condition, all but finitely many
$G$ translates of $C$ have $\overline{d}$-diameter less than $\frac{\epsilon
}{n}$. If we consider any translate $gK$, then $gK\subseteq gg_{1}C\cup
gg_{2}C\cup...\cup gg_{n}C$. Only finitely many $gg_{i}C$ for $g\in G$ have
diameter greater than $\frac{\epsilon}{n}$ and thus only finitely many $gK$
have diameter greater than $n\frac{\epsilon}{n}=\epsilon$.
\end{proof}

The following useful fact is often referred to as the \textquotedblleft
boundary swapping trick\textquotedblright.

\begin{proposition}
\label{Boundary Swapping Theorem} \cite{Bes96, GuMo18} Suppose $G$ and $H$ are
quasi-isometric groups that act geometrically on proper metric ARs $X$ and
$Y$, respectively, and $Y$ can be compactified to a $\mathcal{Z}$-structure
$\left(  \overline{Y},Z\right)  $ for $H$, then $X$ can be compactified by
addition of the same boundary to obtain a $\mathcal{Z}$-structure $\left(
\overline{X},Z\right)  $ for $G$.
\end{proposition}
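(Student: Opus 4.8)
The plan is to transport the $\mathcal{Z}$-compactification of $Y$ across a quasi-isometry. Because $G$ acts geometrically on $X$ and $H$ on $Y$, the orbit maps $G\to X$ and $H\to Y$ are quasi-isometries; composing with the given quasi-isometry $G\to H$ produces a quasi-isometry $X\to Y$. Since $X$ and $Y$ are proper ARs, and hence path connected and uniformly well behaved at the relevant scale, I would first replace this by a \emph{continuous} coarse equivalence $f:X\to Y$ together with a continuous coarse inverse $g:Y\to X$, arranged so that $g\circ f$ and $f\circ g$ lie within bounded distance of the respective identities via controlled homotopies. The recurring difficulty is that a quasi-isometry is only coarsely defined, so the construction must be set up so that this coarse ambiguity \emph{vanishes at infinity}; this is supplied by the $\mathcal{Z}$-structure hypothesis on $(\overline{Y},Z)$.

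Next I would build $\overline{X}=X\sqcup Z$ as a set and topologize it by keeping the metric topology on $X$ and taking, as a neighborhood basis at $z\in Z$, the pullbacks $f^{-1}(V\cap Y)\cup(V\cap Z)$ as $V$ ranges over a neighborhood basis of $z$ in $\overline{Y}$. The crucial point is that the boundary identification be insensitive to the bounded ambiguity in $f$: a competing quasi-isometry $f'$ within bounded distance of $f$ sends each $x$ to a point a bounded distance from $f(x)$, and we need $f(x_n)$ and $f'(x_n)$ to have the same boundary limit. This holds because bounded subsets of $Y$ become $\overline{d}$-small as they leave every compact set, itself a consequence of the nullity of $(\overline{Y},Z)$ together with cocompactness of the $H$-action (a bounded set is covered by boundedly many $H$-translates of a compact fundamental domain, all but finitely many of which are $\overline{d}$-small). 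Compactness of $\overline{X}$ then follows from properness of $f$: a sequence in $X$ leaving every compact set has images leaving every compact set of $Y$, hence subconverging in $\overline{Y}$ to a point of $Z$, which pulls back to convergence in $\overline{X}$. By construction the extension $\overline{f}:\overline{X}\to\overline{Y}$ (the identity on $Z$) and its counterpart $\overline{g}$ are continuous.

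To see that $Z$ is a $\mathcal{Z}$-set in $\overline{X}$, I would transport the ``reeling-in'' homotopy for $(\overline{Y},Z)$ across $f$ and $g$, using the bounded homotopy $g\circ f\simeq\operatorname{id}_{X}$ to correct for non-equivariance and for $g$ being only a coarse inverse; this yields a homotopy of $\overline{X}$ starting at the identity and instantly pushing $Z$ into $X$. Granting that $\overline{X}$ is a compact metrizable ANR, the $\mathcal{Z}$-set property upgrades to $\overline{X}$ being a compact AR: the inclusion $X\hookrightarrow\overline{X}$ is a homotopy equivalence, so $\overline{X}$ inherits contractibility from the AR $X$, and a compact contractible ANR is an AR. Establishing the ANR property of $\overline{X}$ is, I expect, the most delicate point; I would obtain it by realizing $\overline{X}$ through a mapping-cylinder or telescope construction built from the continuous coarse equivalence, so that its local structure is controlled by that of the known ANR $\overline{Y}$.

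Finally, for the nullity condition I would invoke \lemref{Lemma: Nullity Condition Check}: since $G$ acts cocompactly it suffices to verify nullity for a single \emph{connected} compact $C\subseteq X$ with $GC=X$, connectedness being available because the AR $X$ is path connected. Using the metric reformulation of nullity, I would push the translates $gC$ forward by the coarse-Lipschitz map $f$: each $f(gC)$ is a connected set of uniformly bounded diameter, hence covered by a bounded number $N$ of $H$-translates of a fixed cocompact $D\subseteq Y$. As the $gC$ exhaust $X$ the images $f(gC)$ recede to infinity in $Y$, so for all but finitely many $g$ each of these $N$ translates has tiny $\overline{d}$-diameter by the nullity of $(\overline{Y},Z)$; connectedness of $f(gC)$ then chains the tiny pieces into a single set of small $\overline{d}$-diameter. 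Thus all but finitely many $gC$ are $\overline{d}$-small, which is the nullity condition for $(\overline{X},Z)$. The main obstacle, as flagged, is the interplay between the discontinuity of the quasi-isometry and the topological requirements, namely the well-definedness of the compactification and the ANR property, both of which are tamed by the $\mathcal{Z}$-structure forcing bounded data to degenerate at infinity.
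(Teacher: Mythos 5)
The paper does not actually prove this proposition; it is imported wholesale from \cite{Bes96} and \cite{GuMo18}, so the only meaningful comparison is with those proofs. Your outline reconstructs their argument along the standard lines: continuize the quasi-isometry $X\to Y$ (legitimate, though the justification you need is that a proper metric AR admitting a geometric action is \emph{uniformly contractible} --- that is the precise content behind your phrase ``uniformly well behaved at the relevant scale,'' and it is also what makes $g\circ f\simeq\operatorname{id}_X$ a \emph{boundedly} controlled homotopy rather than just a homotopy); topologize $X\sqcup Z$ by pulling back neighborhoods; check well-definedness via ``bounded sets get $\overline{d}$-small at infinity''; transport the boundary-pushing homotopy; and verify nullity by pushing forward a connected compact generating set. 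Most of your steps are correct or easily repaired. For instance, your claim that a set covered by $N$ small translates is itself small needs the translates fattened to open sets before the chaining-through-a-connected-set argument applies --- or, more simply, note that a set of diameter at most $R$ lies in a \emph{single} translate of the compact $R$-neighborhood of the fundamental domain (the action being by isometries), to which nullity applies directly. Likewise, the final transfer of smallness from $f(gC)$ in $\overline{Y}$ back to $gC$ in $\overline{X}$ is asserted but not supplied; it needs a short uniformity argument, e.g.\ if it failed one could choose $x_i,x_i'$ with $\overline{d}_X(x_i,x_i')\geq\epsilon$ whose images shrink to a single boundary point, whence by your very definition of the topology both sequences converge to that point in $\overline{X}$, a contradiction with compactness.

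The genuine gap is the ANR property of $\overline{X}$, which you rightly identify as the delicate point but then black-box with a ``mapping cylinder or telescope'' suggestion. This is the load-bearing step of boundary swapping and the real content of the treatment in \cite{GuMo18}; the standard resolution is not a telescope but Hanner's criterion: once one has homotopies pushing $\overline{X}$ into $X$ with arbitrarily small tracks (the transported $\mathcal{Z}$-set homotopy provides these, with the smallness of the tracks near $Z$ again coming from ``bounded sets degenerate at infinity'' applied to the controlled homotopies --- this is exactly where bounded control of $g\circ f\simeq\operatorname{id}_X$ is indispensable, since without it the transported homotopy need not even extend continuously over $Z$), the space $\overline{X}$ is $\epsilon$-dominated by open subsets of the ANR $X$ for every $\epsilon>0$, hence is an ANR; compactness plus contractibility (via the $\mathcal{Z}$-set property, as you note) then upgrades ANR to AR. So your architecture is the right one and matches the cited proofs step for step, but as written the proof of its hardest component --- ANR-ness and the controlled homotopies feeding it --- is missing rather than merely sketched.
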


\section{$\mathcal{Z}$-structures on generalized Baumslag-Solitar Groups}

A \emph{Baumslag-Solitar group} $BS(m,n)$ is a two generator, one relator
group admitting a presentation of the form%
\[
BS(m,n)=\left\langle s,t\,|\,ts^{m}t^{-1}=s^{n}\right\rangle .
\]
Without loss of generality, we may assume that $0<\left\vert m\right\vert \leq
n$. These groups are HNN extensions of $\mathbb{Z}$ with infinite cyclic
associated subgroups, and the standard presentation 2-complex $K_{m,n}$ is a
$K\left(  \pi,1\right)  $ space. If we begin with the canonical graph of
groups representation of $BS\left(  m,n\right)  $ with one vertex and one edge,
the corresponding Bass-Serre tree is the directed tree $T\left(  \left\vert
m\right\vert ,n\right)  $ with $\left\vert m\right\vert $ incoming and $n$
outgoing edges at each vertex, and the universal cover of $K_{m,n}$ is
homeomorphic to $%
%TCIMACRO{\U{211d} }%
%BeginExpansion
\mathbb{R}
%EndExpansion
\times T\left(  \left\vert m\right\vert ,n\right)  $. Gersten \cite{Ger92} has
shown that, provided $\left\vert m\right\vert \neq n$, the Dehn function of
$BS\left(  m,n\right)  $ is not bounded by a polynomial. By contrast, Dehn
functions of hyperbolic and CAT(0) groups are bounded by linear and quadratic
functions, respectively. So most Baumslag-Solitar groups are neither
hyperbolic nor CAT(0). As such, this collection of groups contains some of the
simplest candidates for $\mathcal{Z}$-structures not covered by the motivating examples.

%These groups act properly and cocompactly on $T\times\mathbb{R}$ (where $T$ is a tree), a CAT(0) space, but not isometrically. In fact, rectangles get arbitrarily large in the vertical direction of a sheet, also violating the nullity condition. The main work in proving this theorem is to address these two issues.

\subsection{Generalized Baumslag-Solitar
groups\label{Subsection: Generalized Baumslag-Solitar groups}}

A \emph{generalized Baumslag-Solitar group} is the fundamental group $G$ of a
finite graph of groups with all vertex and edge groups $\mathbb{Z}$. In
\cite{Why01}, Whyte classified generalized Baumslag-Solitar groups, up to
quasi-isometry.
%Many of these groups are neither CAT(0) nor hyperbolic (Citation????). Baumslag-Solitar groups $BS(m,n)$ are contained in this class as they are HNN extensions of $\mathbb{Z}$.

\begin{theorem}
\label{Theorem: Whyte Classification}\cite{Why01} If $\Gamma$ is a graph of
$\mathbb{Z}$s and $G=\pi_{1}\Gamma$, then exactly one of the following is true:

\begin{enumerate}
\item $G$ contains a subgroup of finite index of the form $\mathbb{Z\times
F}_{n}$

\item $G=BS(1,n)$ for some $n>1$

\item $G$ is quasi-isometric to $BS(2,3)$.
\end{enumerate}
\end{theorem}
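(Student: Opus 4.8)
The plan is to reduce the trichotomy to a single algebraic invariant, the \emph{modular homomorphism}, and then to separate the remaining cases using coarse (quasi-isometry) invariants. For a finite graph of $\mathbb{Z}$'s, each oriented edge carries two nonzero integer labels recording the two inclusions $\mathbb{Z}\hookrightarrow\mathbb{Z}$ of its edge group into the adjacent vertex groups; taking the ratio of these labels along edges and multiplying around loops yields a modular homomorphism $\Delta\colon G\to\mathbb{Q}_{>0}^{\times}$ (for $BS(m,n)$ this is $s\mapsto 1$, $t\mapsto n/|m|$). First I would check that $\Delta$ is independent of the graph-of-groups presentation, so that the property of being \emph{unimodular} (meaning $\Delta$ trivial) is an invariant of $G$ itself. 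The trichotomy is then governed by two dichotomies: unimodular versus non-unimodular, and, among the non-unimodular groups, amenable versus non-amenable.

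Case (1), $G$ unimodular: I would show $G$ is virtually $\mathbb{Z}\times F_n$. Unimodularity means the edge-label ratios multiply to $1$ around every loop, which forces the vertex generators to be consistently commensurable; after passing to a finite-index subgroup one obtains a single infinite cyclic subgroup that is normal and, after a further finite-index reduction, central, with quotient acting freely on the Bass-Serre tree and hence free. The extension splits up to finite index, giving $\mathbb{Z}\times F_n$. This is the more algebraic and more routine of the cases.

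Cases (2) and (3), $G$ non-unimodular: here the argument becomes geometric. The action of $G$ on its Bass-Serre tree $T$ makes $G$ act geometrically on a ``tree of lines'' model space $X$ (metrically a warped product generalizing the $\mathbb{R}\times T$ of the single-edge case), and I would analyze the large-scale geometry of $X$. Non-unimodularity forces a genuine height function on $X$. The amenable subcase is exactly the ascending one, where all inclusions point the same way; among GBS groups the non-unimodular solvable examples are precisely the $BS(1,n)$ with $n>1$, giving case (2). In the non-amenable subcase the model space is ``bushy'' in every direction, and the goal is to show all such $X$ are mutually quasi-isometric, hence quasi-isometric to the model space of $BS(2,3)$, giving case (3).

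The main obstacle is this last step: the quasi-isometric classification of the non-amenable non-unimodular models. This is where I expect to need the machinery of quasi-actions on trees (Mosher--Sageev--Whyte): one must promote a quasi-isometry between two such model spaces to a quasi-action on a tree, straighten the horocyclic structure, and build an explicit quasi-isometry matching up the height flows even when the local tree valences $|m|,n$ differ from $2,3$. Finally, to see the three cases are \emph{mutually exclusive}, I would invoke quasi-isometry invariants: case (1) groups are unimodular whereas every $BS(1,n)$ is not, so (1) and (2) are disjoint; amenability, a coarse invariant, separates the amenable $BS(1,n)$ of case (2) from the non-amenable $BS(2,3)$ of case (3); and the Dehn function --- at most quadratic for the CAT(0) group $\mathbb{Z}\times F_n$ but exponential for $BS(2,3)$ by Gersten's result quoted above --- separates case (1) from case (3).
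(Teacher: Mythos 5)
The first thing to note is that the paper does not prove this statement at all: it is Whyte's classification theorem, imported verbatim with a citation to \cite{Why01}, so your proposal can only be measured against Whyte's published argument. Against that benchmark, your skeleton is the right one: the modular homomorphism $\Delta\colon G\to\mathbb{Q}_{>0}$, the dichotomy unimodular/non-unimodular, the identification of the non-unimodular amenable groups with the $BS(1,n)$, and the mutual-exclusivity arguments (unimodularity separates (1) from (2), amenability separates (2) from (3), and the Dehn function --- quadratic for the CAT(0) group $\mathbb{Z}\times F_n$, exponential for $BS(2,3)$ by the same Gersten result this paper quotes --- separates (1) from (3)) all match the actual structure of Whyte's theorem and are sound. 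The unimodular case is also essentially as you describe: unimodularity yields a commensurated, in fact virtually normal and then virtually central, infinite cyclic subgroup with virtually free quotient, and $H^{2}(F_n;\mathbb{Z})=0$ splits the central extension.

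The genuine gap is case (3), which is the entire content of the theorem, and where your proposed tool points the wrong way. The Mosher--Sageev--Whyte quasi-action machinery takes a quasi-isometry (or a group quasi-isometric to a tree-like space) as \emph{input} and produces an action on a tree as output; it is the engine of quasi-isometric \emph{rigidity}, i.e.\ of the converse classification, not a method for constructing quasi-isometries between the model spaces of two different non-unimodular, non-amenable graphs of $\mathbb{Z}$'s. Your phrase ``promote a quasi-isometry between two such model spaces to a quasi-action'' presupposes exactly the map you are obliged to build. Whyte's actual argument for case (3) is constructive and of a different flavor: non-unimodularity plus non-amenability make the model $2$-complexes uniformly ``bushy'' non-amenable spaces of bounded geometry, and he exploits the fact that for such spaces coarse equivalences can be upgraded to, and built as, bilipschitz equivalences (Hall-type matching arguments in uniformly finite homology), producing explicit bilipschitz identifications of each such complex with the $BS(2,3)$ complex. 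Without some such construction your proposal establishes (1), (2), and exclusivity, but not the heart of the trichotomy. A smaller but real omission: your claim that the non-unimodular amenable graph-of-$\mathbb{Z}$'s groups are precisely the $BS(1,n)$, $n>1$, needs the standard reduction of the graph of groups to a single ascending loop (no $F_2$ forces the Bass--Serre action to be of ascending type); it is routine, but it is a step, not a tautology.
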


As with the ordinary Baumslag-Solitar groups, each generalized
Baumslag-Solitar group $G$ acts properly and cocompactly on $%
%TCIMACRO{\U{211d} }%
%BeginExpansion
\mathbb{R}
%EndExpansion
\mathbb{\times}T$ where $T$ is the Bass-Serre tree of its graph of groups
representation. If $G$ is of the first type mentioned in Theorem
\ref{Theorem: Whyte Classification}, it is quasi-isometric to the CAT(0) group
$%
%TCIMACRO{\U{2124} }%
%BeginExpansion
\mathbb{Z}
%EndExpansion
\times F_{n}$; so by the boundary swapping trick, (Proposition
\ref{Boundary Swapping Theorem}), $G$ admits a $\mathcal{Z}$-structure. By
another application of Theorem \ref{Theorem: Whyte Classification} and the
boundary swapping trick, we can then obtain $\mathcal{Z}$-structures for all
generalized Baumslag-Solitar groups, provided we can obtain them for ordinary
Baumslag-Solitar groups. That is where we turn our attention now.

\subsection{A \textquotedblleft standard\textquotedblright\ action of
$BS\left(  m,n\right)  $ on $%
%TCIMACRO{\U{211d} }%
%BeginExpansion
\mathbb{R}
%EndExpansion
\times T\left(  \left\vert m\right\vert ,n\right)
\label{Subsection: standard action}$}

As noted above, $BS\left(  m,n\right)  $ acts properly, freely, and
cocompactly on $%
%TCIMACRO{\U{211d} }%
%BeginExpansion
\mathbb{R}
%EndExpansion
\times T\left(  \left\vert m\right\vert ,n\right)  $. In Example
\ref{Example: CAT(0) boundaries}, we observed that this space admits a
$\mathcal{Z}$-compactification by addition of the suspension of $\partial
_{\infty}T(\left\vert m\right\vert ,n)$. That is accomplished by giving $%
%TCIMACRO{\U{211d} }%
%BeginExpansion
\mathbb{R}
%EndExpansion
\times T\left(  \left\vert m\right\vert ,n\right)  $ its natural CAT(0) metric
and adding the visual boundary. This gives us a weak $\mathcal{Z}$-structure
for $BS\left(  m,n\right)  $, but since the action of $BS\left(  m,n\right)  $
on this CAT(0) space is not by isometries, the nullity condition does not
follow. In fact, if we subdivide $%
%TCIMACRO{\U{211d} }%
%BeginExpansion
\mathbb{R}
%EndExpansion
\times T\left(  \left\vert m\right\vert ,n\right)  $ into rectangular
principal domains for $BS\left(  m,n\right)  $ in the traditional manner (see
Figure 1) and if $\left\vert m\right\vert \neq n$, these rectangles grow
exponentially as they are translated along the positive $t$-axis. More
importantly (for our purposes), translates of the fundamental domain remain
large in the compactification (details to follow). Arranging the nullity
condition will require significantly more work.

Although this \textquotedblleft standard\textquotedblright\ action of
$BS\left(  m,n\right)  $ on $%
%TCIMACRO{\U{211d} }%
%BeginExpansion
\mathbb{R}
%EndExpansion
\times T\left(  \left\vert m\right\vert ,n\right)  $ with its CAT(0) metric
and corresponding visual boundary does not give the desired $\mathcal{(E)Z}%
$-structure, the picture it provides is useful; therefore we supply some
additional details.

For the moment it is convenient to assume that $m>0$. Choose a preferred
vertex $v_{0}$ of $T\left(  \left\vert m\right\vert ,n\right)  $ and place the
Cayley graph $\Gamma$ of $BS\left(  m,n\right)  $ in $%
%TCIMACRO{\U{211d} }%
%BeginExpansion
\mathbb{R}
%EndExpansion
\times T(\left\vert m\right\vert ,n)$ so that $\mathbf{v}_{0}=\left(
0,v_{0}\right)  $ corresponds to $1\in B\left(  m,n\right)  $, and the
positively oriented edge-ray $\tau^{+}\subseteq\Gamma$ whose edges are each
labeled by an outward pointing $t$ and the negatively oriented edge-ray
$\tau^{-}$whose edges are each labeled by an inward pointing $t$ both lie in
$\left\{  0\right\}  \times T(\left\vert m\right\vert ,n)$. In other words,
the line $\tau\equiv\tau^{-}\cup\tau^{+}\subseteq\Gamma$, corresponding to the
subgroup $\left\langle t\right\rangle $, is a subset of $\left\{  0\right\}
\times T(\left\vert m\right\vert ,n)$. Subdivide $%
%TCIMACRO{\U{211d} }%
%BeginExpansion
\mathbb{R}
%EndExpansion
\times\left\{  v_{0}\right\}  $ into edges of length $1/n$, each oriented in
the positive $%
%TCIMACRO{\U{211d} }%
%BeginExpansion
\mathbb{R}
%EndExpansion
$-direction and labeled by the generator $s$. Thus we have identified this
line with the subgroup $\left\langle s\right\rangle $. Let $R_{0}\subseteq%
%TCIMACRO{\U{211d} }%
%BeginExpansion
\mathbb{R}
%EndExpansion
\times\tau$ be the $1\times1$ rectangle with lower left-hand vertex at $1$ and
boundary labeled by the defining relator of $BS\left(  m,n\right)  $. Tile the
plane $%
%TCIMACRO{\U{211d} }%
%BeginExpansion
\mathbb{R}
%EndExpansion
\times\tau$ with rectangular fundamental domains, each of whose boundaries is
labeled by the relator as shown in Figure 1, keeping in mind that this plane
represents only a small portion of the Cayley complex.
\begin{figure}[h]
	\includegraphics[trim=500mm 90mm 0mm 30mm, scale=0.3]{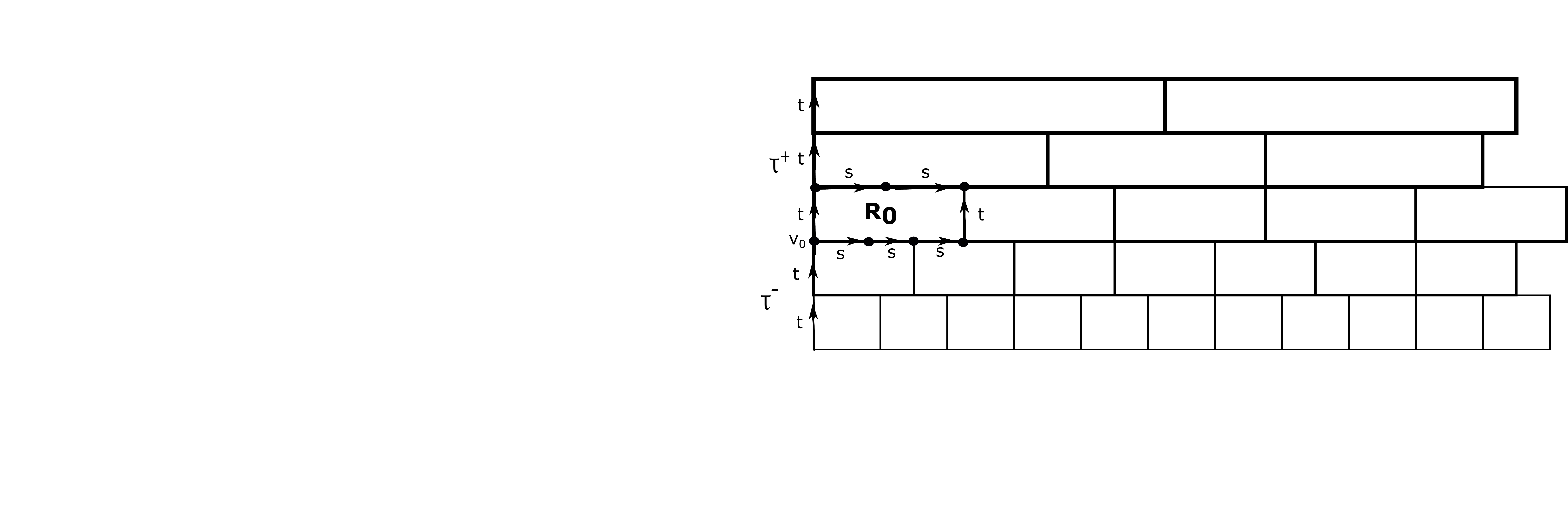}
	\caption{Tiling of BS(2,3)}
\end{figure} 

For each edge-ray $\rho\subseteq T(\left\vert m\right\vert ,n)$ emanating from
$v_{0}$, we refer to the half-plane $%
%TCIMACRO{\U{211d} }%
%BeginExpansion
\mathbb{R}
%EndExpansion
\times\rho$ as a \emph{sheet} of $%
%TCIMACRO{\U{211d} }%
%BeginExpansion
\mathbb{R}
%EndExpansion
\times T\left(  \left\vert m\right\vert ,n\right)  $. If all edges on $\rho$
are positively oriented, call $%
%TCIMACRO{\U{211d} }%
%BeginExpansion
\mathbb{R}
%EndExpansion
\times\rho$ a \emph{positive sheet}; if all edges are negatively oriented,
call $%
%TCIMACRO{\U{211d} }%
%BeginExpansion
\mathbb{R}
%EndExpansion
\times\rho$ a \emph{negative sheet}; and if $\rho$ contains both orientations,
call $%
%TCIMACRO{\U{211d} }%
%BeginExpansion
\mathbb{R}
%EndExpansion
\times\rho$ a \emph{mixed sheet}. Call $%
%TCIMACRO{\U{211d} }%
%BeginExpansion
\mathbb{R}
%EndExpansion
\times\tau^{+}$ the \emph{preferred positive sheet and }$%
%TCIMACRO{\U{211d} }%
%BeginExpansion
\mathbb{R}
%EndExpansion
\times\tau^{-}$ the \emph{preferred negative sheet}. (Note: Although the
oriented tree $\left\{  0\right\}  \times T(\left\vert m\right\vert ,n)$ plays
a useful role, most of its edges are not contained in $\Gamma$.)

Notice that each sheet is a convex subset of $%
%TCIMACRO{\U{211d} }%
%BeginExpansion
\mathbb{R}
%EndExpansion
\times T\left(  \left\vert m\right\vert ,n\right)  $ isometric to a Euclidean
half-plane. Up to horizontal translation, all positive sheets inherit a tiling
identical to that of $%
%TCIMACRO{\U{211d} }%
%BeginExpansion
\mathbb{R}
%EndExpansion
\times\tau^{+}$ and all negative sheets inherit a tiling identical (up to
translation) to $%
%TCIMACRO{\U{211d} }%
%BeginExpansion
\mathbb{R}
%EndExpansion
\times\tau^{-}$. So, in positive sheets the widths of the fundamental domains
increase (exponentially) as one gets further from $v_{0}$ in the $T(\left\vert
m\right\vert ,n)$-direction, while in the negative sheets the widths decrease.
In mixed sheets, widths do not change in a monotone manner---sometimes they
increase and sometimes they decrease; but the resulting tiling is always finer
than that of an appropriately placed positive sheet. In other words, the tiles
in a generic sheet always fit inside those of a correspondingly subdivided
positive sheet. Finally, note also that for $m<0$, the tiling is the same, but
with the $s$ edges at odd integer heights oriented in the negative $%
%TCIMACRO{\U{211d} }%
%BeginExpansion
\mathbb{R}
%EndExpansion
$-direction.

\subsection{An adjusted action of $BS\left(  m,n\right)  $ on $%
%TCIMACRO{\U{211d} }%
%BeginExpansion
\mathbb{R}
%EndExpansion
\times T\left(  \left\vert m\right\vert ,n\right)
\label{Subsection: the adjusted action}$}

Under the above setup, the nullity condition fails badly. For example,
translates of $R_0$ by powers of $t$ limit out on the entire quarter circle
bounding the right-hand quadrant of $%
%TCIMACRO{\U{211d} }%
%BeginExpansion
\mathbb{R}
%EndExpansion
\mathbb{\times\tau}^{+}$ in the visual compactification of the CAT(0) space $%
%TCIMACRO{\U{211d} }%
%BeginExpansion
\mathbb{R}
%EndExpansion
\times T\left(  \left\vert m\right\vert ,n\right) $. Instead of changing the
space or its compactification, we will remedy this problem by changing the
action. Some of the resulting calculations are lengthy, but the idea is
simple. Define $f:%
%TCIMACRO{\U{211d} }%
%BeginExpansion
\mathbb{R}
%EndExpansion
\times T\left(  \left\vert m\right\vert ,n\right)  \rightarrow%
%TCIMACRO{\U{211d} }%
%BeginExpansion
\mathbb{R}
%EndExpansion
\times T\left(  \left\vert m\right\vert ,n\right)  $ by
\[
f(x,y)=(\operatorname{sgn}\left(  x\right)  \log(\log(|x|+e)),y)
\]
Our new action is via conjugation by this homeomorphism. More specifically,
for each $g\in BS(m,n)$, viewed as a self-homeomorphism of $%
%TCIMACRO{\U{211d} }%
%BeginExpansion
\mathbb{R}
%EndExpansion
\times T\left(  \left\vert m\right\vert ,n\right)  $ under the original
$BS(m,n)$ action, define $\overline{g}:%
%TCIMACRO{\U{211d} }%
%BeginExpansion
\mathbb{R}
%EndExpansion
\times T\left(  \left\vert m\right\vert ,n\right)  \rightarrow%
%TCIMACRO{\U{211d} }%
%BeginExpansion
\mathbb{R}
%EndExpansion
\times T\left(  \left\vert m\right\vert ,n\right)  $ by $\overline{g}=f\circ
g\circ f^{-1}$. Here $f^{-1}:%
%TCIMACRO{\U{211d} }%
%BeginExpansion
\mathbb{R}
%EndExpansion
\times T\left(  \left\vert m\right\vert ,n\right)  \rightarrow%
%TCIMACRO{\U{211d} }%
%BeginExpansion
\mathbb{R}
%EndExpansion
\times T\left(  \left\vert m\right\vert ,n\right)  $ can be specified by:%
\[
f^{-1}(x,y)=\left(  \operatorname{sgn}\left(  x\right)  (\exp\left(
\exp\left(  \left\vert x\right\vert \right)  \right)  -e\right)  ,y)
\]
For simplicity, we refer to this as the $\overline{BS(m,n)}$-action on $%
%TCIMACRO{\U{211d} }%
%BeginExpansion
\mathbb{R}
%EndExpansion
\times T\left(  \left\vert m\right\vert ,n\right)  $. Our goal then is to show
that \emph{with this action}, the visual compactification of $%
%TCIMACRO{\U{211d} }%
%BeginExpansion
\mathbb{R}
%EndExpansion
\times T\left(  \left\vert m\right\vert ,n\right)  $ satisfies the definition
of $\mathcal{Z}$-structure. After that task is completed, we will show that
this action also extends to the visual boundary, thereby completing the proof
of Theorem \ref{Theorem: Boundary BS(m,n)}.

Before proceeding with the calculations, note that the $\overline{BS(m,n)}%
$-action on the CAT(0) space $%
%TCIMACRO{\U{211d} }%
%BeginExpansion
\mathbb{R}
%EndExpansion
\times T\left(  \left\vert m\right\vert ,n\right)  $ is still not by
isometries---as noted earlier, that would be impossible since $BS\left(
m,n\right)  $ is not CAT(0) when $\left\vert m\right\vert \neq n$. To obtain
the isometry requirement implicit in Definition \ref{Definition: Z-structure}
we can apply the following proposition. It reveals that the isometry
requirement is mostly just a technicality.

\begin{proposition}
\label{Propostion:Geometric Action}\cite{AMN11} Suppose $G$ acts properly and
cocompactly on a locally compact space $X$. Then there is a topologically
equivalent proper metric for $X$ under which the action is by
isometries.\bigskip
\end{proposition}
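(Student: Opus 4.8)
The plan is to translate the conclusion into the language of invariant metrics: an action is by isometries for a metric $d$ precisely when $d$ is $G$-invariant, i.e. $d(gx,gy)=d(x,y)$ for all $g\in G$. So it suffices to produce a metric $d$ on $X$ that (i) induces the given topology, (ii) is $G$-invariant, and (iii) is \emph{proper} in the Heine--Borel sense that closed balls are compact. As a preliminary I would record that $X$ already carries \emph{some} compatible proper metric: cocompactness writes $X=GC$ as a countable union of translates of a compact set, hence $X$ is $\sigma$-compact, and together with local compactness this yields a compatible proper metric $\rho$ by standard results. (In the application $X=\mathbb{R}\times T$ already carries the proper CAT(0) metric, so this step is automatic.)

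The main obstacle, which I would address head-on, is that \textbf{invariance and properness pull against one another} exactly because the action is cocompact. The naive invariantization $d_0(x,y)=\sup_{g\in G}\min(1,\rho(gx,gy))$ is $G$-invariant and, using properness to control the supremum locally, can be shown compatible; but it is bounded, hence never proper on the noncompact $X$. Dropping the truncation makes the supremum diverge for the expanding actions at hand. Nor can one repair this by adding a proper invariant ``height'' function, since any $G$-invariant function descends to the compact quotient $X/G$ and is therefore bounded. The resolution is to build $d$ from a \emph{length (chain) structure} rather than pointwise.

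Concretely, I would fix a compact $C$ with $GC=X$; properness guarantees that $S=\{g\in G : gC\cap C\neq\emptyset\}$ is finite, and this finiteness is the source of all local control below. Using $\rho$ I would define a $G$-invariant \emph{local} comparison $\rho'$ by setting $\rho'(a,b)=\rho(g^{-1}a,g^{-1}b)$ whenever $a,b$ lie in a common translate $gC$ (reconciling the finitely many overlaps by a minimum or a partition of unity), and then take $d(x,y)$ to be the infimum of $\sum_{i=1}^{k}\rho'(x_{i-1},x_i)$ over all chains $x=x_0,\dots,x_k=y$ whose consecutive points lie in a common translate. By construction $d$ is symmetric and satisfies the triangle inequality, and it is $G$-invariant because each $g$ permutes the translates $gC$ while preserving $\rho'$. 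This is the Milnor--\v{S}varc philosophy: $d$ behaves like a word metric weighted by the local geometry of $\rho$.

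It then remains to verify (i) and (iii). Compatibility (including that $d$ separates points) follows from local finiteness: near any point, $d$ is controlled above and below by the local model $\rho'$, which is a homeomorphic copy of $\rho$, so $d$ and the original topology have the same neighborhood bases. Properness is the technical crux. A closed $d$-ball of radius $R$ about $x_0$ can meet only finitely many translates $gC$, since a chain of length at most $R$ can traverse only boundedly many translates (by finiteness of $S$), and its intersection with each such translate is $\rho$-closed and bounded, hence compact; a finite union of compacta is compact. I expect the bookkeeping in this properness argument---bounding how many translates a length-$R$ chain can cross---to be where the real work lies. In the setting of this paper the action is in fact free, so $X\to X/G$ is a covering map and $d$ is simply the length metric lifted from a compatible length metric on the compact base $X/G$, which streamlines every one of these verifications.
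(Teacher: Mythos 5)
First, a framing point: the paper contains no proof of this proposition at all---it is quoted verbatim from \cite{AMN11} (Abels--Manoussos--Noskov)---so there is no internal argument to compare yours against. Judged on its own, your chain/length construction is in the same spirit as what the cited source actually does (invariant pseudometrics built from chains subordinate to invariant data), except that their theorem does not assume cocompactness and correspondingly uses coverings with \emph{unbounded} weights, whereas you exploit cocompactness to work with a single compact $C$ satisfying $GC=X$. That specialization is legitimate for the application in this paper, and your negative observations correctly identify why a length-type construction is forced; indeed, your hedge that the truncated supremum metric $d_0$ ``can be shown compatible'' is actually false for the expanding actions at issue here: for the standard $BS(m,n)$-action on $\mathbb{R}\times T$, powers of $t$ stretch the $\mathbb{R}$-coordinate without bound, so $d_0(x,y)=1$ for many distinct pairs and $d_0$ fails to induce the topology. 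Since you discard $d_0$ anyway, this is inessential.

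Two genuine gaps remain. (i) Your properness step---``a chain of length at most $R$ can traverse only boundedly many translates (by finiteness of $S$)''---does not follow from finiteness of $S$ alone: individual steps can have arbitrarily small $\rho'$-cost, so a priori a cheap chain could wander through many translates. What is needed is a uniform constant $\epsilon_0>0$ such that any chain starting in $gC$ and ending outside the star $gSC$ has cost at least $\epsilon_0$; this is proved by a compactness argument on $SC$ (only finitely many translates meet it, and the finitely many pulled-back metrics are uniformly comparable to the topology there), with $G$-invariance guaranteeing that one constant works for every $g$. Then a $d$-ball of radius $R$ lies in the union of translates within coarse distance roughly $R/\epsilon_0$ of the basepoint, which is a finite union of compacta. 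Note that the same local estimate, not mere ``local finiteness,'' is what makes $d$ positive on distinct points, i.e., gives compatibility from below. (ii) Finiteness of $d$ requires the overlap graph of the translates $gC$ to be connected. This holds when $X$ is connected---true for $\mathbb{R}\times T$---but does not follow from the proposition's stated hypotheses: a countable, infinitely generated group acting on itself with the discrete topology acts properly and cocompactly, yet no compact $C$ yields a connected overlap graph, and your $d$ would be infinite off the orbit-components; this is exactly where the unbounded-weight device of \cite{AMN11} is needed in general. Your closing covering-space shortcut is similarly application-specific (the compact quotient here is a finite complex, which carries a length metric, and Hopf--Rinow gives properness of the lift; a general compact metrizable quotient need not admit any length metric), but you flag that yourself. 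With points (i) and (ii) supplied, your argument is complete for the case the paper actually uses.
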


\subsection{Nullity condition for the $\overline{BS\left(  m,n\right)  }%
$-action on $%
%TCIMACRO{\U{211d} }%
%BeginExpansion
\mathbb{R}
%EndExpansion
\times T\left(  \left\vert m\right\vert ,n\right)  $}

Recall the $1\times1$ rectangle $R_{0}\subseteq%
%TCIMACRO{\U{211d} }%
%BeginExpansion
\mathbb{R}
%EndExpansion
\times\tau$ defined earlier. Under the standard action of $BS\left(
m,n\right)  $ on $%
%TCIMACRO{\U{211d} }%
%BeginExpansion
\mathbb{R}
%EndExpansion
\times T\left(  \left\vert m\right\vert ,n\right)  $ acts as our preferred
fundamental domain. Translates of $R_{0}$ by elements of $BS\left(  m,n\right)
$ produce a \textquotedblleft tiling\textquotedblright\ of $%
%TCIMACRO{\U{211d} }%
%BeginExpansion
\mathbb{R}
%EndExpansion
\times T\left(  \left\vert m\right\vert ,n\right)  $, part of which is
pictured in Figure 1. The most notable trait of this tiling is that, while
the heights of all rectangles in the tiling are $1$ (measured along the
$T\left(  \left\vert m\right\vert ,n\right)  $-coordinate), the widths of
rectangles in the positive sheets grow exponentially with the $T\left(
\left\vert m\right\vert ,n\right)  $-coordinate whenever $\left\vert
m\right\vert \neq n$. For example, a generic tile in a positive sheet with
lower edge at height $b$ will have width $\left(  \frac{m}{\left\vert
n\right\vert }\right)  ^{b}$. Widths of tiles in generic sheets are bounded
above by this number. Under the $\overline{BS(m,n)}$-action on $%
%TCIMACRO{\U{211d} }%
%BeginExpansion
\mathbb{R}
%EndExpansion
\times T\left(  \left\vert m\right\vert ,n\right)  $, the role of $R_{0}$ is
played by the compressed rectangle $\overline{R}_{0}=f\left(  R_{0}\right)  $,
and every $\overline{BS(m,n)}$-tile has its width compressed by the $\log\log$
function. Most importantly, for \ the sake of calculations, a generic
$\overline{BS(2,3)}$-tile in the preferred positive sheet will have the
coordinates shown in Figure 2. For a generic $\overline{BS(m,n)}$-tile, simply replace $2$ and $3$ by $m$ and $n$, respectively. 
\begin{figure}[h]
\includegraphics[scale=0.4]{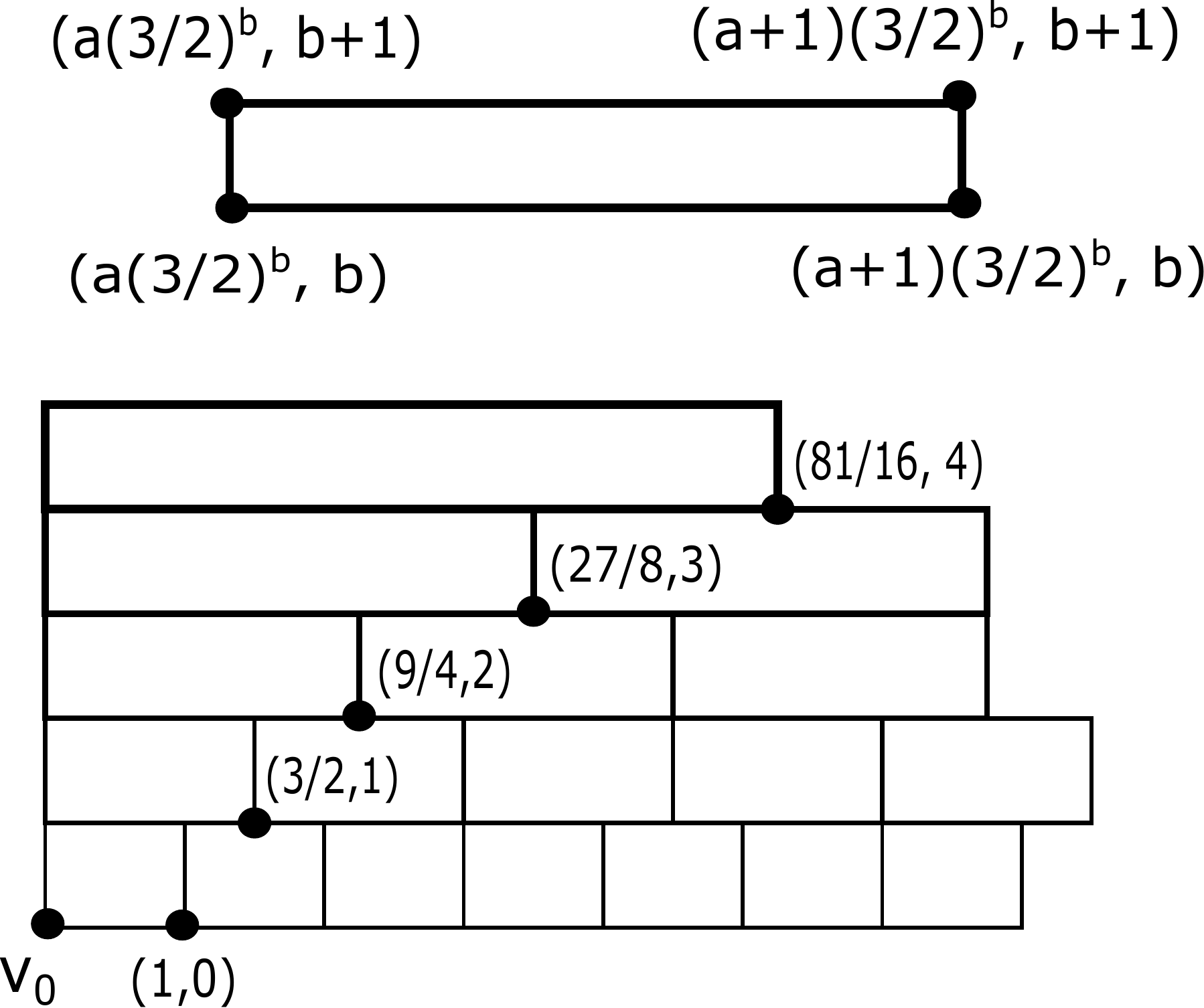}
\caption{Coordinates of $BS(2,3)$}
\end{figure} 

For a CAT(0) space $X$, the reason $\partial_{\infty}X$ is called the
\textquotedblleft visual boundary\textquotedblright\ is because, in a flat
geometry, the size of a set $A\subseteq X$ viewed within $\overline{X}$ is
related to the angle of vision it subtends for a viewer stationed at a
fixed origin. For that reason (with more precision to be provided shortly),
the following lemma and its corollary are key. To keep calculations as simple
as possible, we begin by analyzing the preferred positive sheet of $%
%TCIMACRO{\U{211d} }%
%BeginExpansion
\mathbb{R}
%EndExpansion
\times T\left(  \left\vert m\right\vert ,n\right)  $.

\begin{lemma}
\label{Theorem: Scaling Function Property}For each $\epsilon>0$, there exists
$M_{\epsilon}>0$ such that if $\overline{g}\overline{R}_{0}$ is a
$\overline{BS(m,n)}$-tile lying in the preferred positive sheet $%
%TCIMACRO{\U{211d} }%
%BeginExpansion
\mathbb{R}
%EndExpansion
\mathbb{\times\tau}^{+}$ of $%
%TCIMACRO{\U{211d} }%
%BeginExpansion
\mathbb{R}
%EndExpansion
\times T\left(  \left\vert m\right\vert ,n\right)  $ and outside the closed
$M_{\epsilon}$-ball of $%
%TCIMACRO{\U{211d} }%
%BeginExpansion
\mathbb{R}
%EndExpansion
\times T\left(  \left\vert m\right\vert ,n\right)  $ centered at
$\mathbf{v}_{0}$, and if $\mathbf{w}_{1},\mathbf{w}_{2}\in\overline
{g}\overline{R}_{0}$, then the angle between segments $\overline
{\mathbf{v}_{0}\mathbf{w}_{1}}$ and $\overline{\mathbf{v}_{0}\mathbf{w}_{2}}$
is less than $\epsilon$.
\end{lemma}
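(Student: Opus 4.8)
The plan is to work entirely inside the preferred positive sheet $\mathbb{R}\times\tau^{+}$, which is an isometrically embedded convex Euclidean half-plane; since it is convex and contains $\mathbf{v}_{0}$, geodesics from $\mathbf{v}_{0}$ stay inside it, so the ambient $M_{\epsilon}$-ball condition is just the Euclidean one. I would identify the sheet with $\{(x,y):y\ge 0\}$ so that $\mathbf{v}_{0}$ is the origin $O$, the $x$-coordinate records the $\mathbb{R}$- (i.e.\ $s$-) direction and $y$ records height along $\tau^{+}$. Writing $\phi(x)=\operatorname{sgn}(x)\log(\log(|x|+e))$, the map $f$ restricts to $(x,y)\mapsto(\phi(x),y)$; because $\phi$ depends only on $x$, it carries each standard rectangular tile to a rectangle. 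Concretely, a standard tile at height level $b$ (so $y\in[b,b+1]$) has constant $x$-width $\lambda^{b}$ with $\lambda=n/|m|\ge 1$ (the exponentially growing width recorded above), so its image $\overline{g}\,\overline{R}_{0}$ is the rectangle $[\phi(X_{1}),\phi(X_{2})]\times[b,b+1]$ with $X_{2}-X_{1}=\lambda^{b}$. Write $W=\phi(X_{2})-\phi(X_{1})$ for its compressed width; its height is still $1$.

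The elementary geometric input is a law-of-sines estimate: if a set of diameter $\Delta$ lies at distance at least $D$ from $O$, then any two rays from $O$ to points of the set make an angle whose sine is at most $\Delta/D$. For our tile $\Delta=\sqrt{W^{2}+1}$, and, crucially, the distance $D$ from $O$ to the tile satisfies $D\ge b$ since every point of the tile has $y$-coordinate at least $b$. Taking $\epsilon<\pi/2$ without loss of generality, it therefore suffices to make $\sqrt{W^{2}+1}/D<\sin\epsilon$, for then every such angle is acute with sine below $\sin\epsilon$, hence below $\epsilon$.

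The hard part is that $W$ is \emph{not} bounded: since $\phi$ spreads the enormous width $\lambda^{b}$ only logarithmically, maximizing the concave increasing function $\phi$ over horizontal translates (i.e.\ at $X_{1}=0$) gives $W\le \log(\log(\lambda^{b}+e))$, which grows like $\log b$. Thus a naive ``a small tile far away subtends a small angle'' argument fails, and the saving feature is precisely that the distance grows at least linearly, $D\ge b$, so that $\sqrt{W^{2}+1}/D$ still tends to $0$. This interplay $D\ge b$ versus $W=O(\log b)$ is the crux, and it motivates splitting into two regimes. First I would fix $B_{\epsilon}$ so large that for every $b\ge B_{\epsilon}$ one has $\sqrt{\bigl(\log(\log(\lambda^{b}+e))\bigr)^{2}+1}\,/\,b<\sin\epsilon$; then \emph{every} tile at height $b\ge B_{\epsilon}$ satisfies the conclusion, irrespective of its horizontal position. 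Second, for the finitely many levels $b<B_{\epsilon}$ I would use the mean value theorem and $\phi'(x)=\bigl((x+e)\log(x+e)\bigr)^{-1}\le\phi'(0)=1/e$ to bound $W\le\phi'(X_{1})\lambda^{b}\le\tfrac{1}{e}\lambda^{B_{\epsilon}}$, a constant, so the diameter is bounded by some $\Delta_{\epsilon}$ depending only on $\epsilon$; choosing $M_{\epsilon}\ge B_{\epsilon}$ large enough that $\Delta_{\epsilon}/M_{\epsilon}<\sin\epsilon$ then forces the angle below $\epsilon$ for every such tile lying outside the $M_{\epsilon}$-ball.

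Finally I would assemble the cases: given any tile $\overline{g}\,\overline{R}_{0}$ outside the $M_{\epsilon}$-ball, if its height level is $\ge B_{\epsilon}$ the first regime applies, and otherwise $b<B_{\epsilon}$ and the second applies; either way the angle subtended at $\mathbf{v}_{0}$ by any two of its points is less than $\epsilon$. Tiles lying left of or straddling the $y$-axis are handled identically using that $\phi$ is odd (at worst replacing $W$ by $2\log(\log(\lambda^{b}+e))$, which is absorbed into the choice of $B_{\epsilon}$), while the finitely many low tiles near the axis lie inside the $M_{\epsilon}$-ball, so no claim is needed for them. Everything beyond the regime split and the bound $D\ge b$ is routine calculus bookkeeping.
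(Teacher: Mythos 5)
Your proposal is correct, but it reaches the lemma by a genuinely different route than the paper. The paper's proof writes down the four vertex coordinates of the compressed tile, expresses the subtended angle as a difference of arctangents, $\theta=\tan^{-1}\left(\frac{b+1}{p}\right)-\tan^{-1}\left(\frac{b}{q}\right)$ with $p=\log\log(a\lambda^{b}+e)$ and $q=\log\log((a+1)\lambda^{b}+e)$ where $\lambda=n/|m|$, combines these via inverse-tangent identities into a single expression whose key term is $\frac{b(q-p)}{pq+b^{2}+b}$, and then shows that term tends to $0$ as $\sqrt{a^{2}+b^{2}}\rightarrow\infty$ by a four-case limit analysis ($b$ bounded and $a$ large; $a=0$; $a$ bounded and $b$ large; both large), invoking L'H\^{o}pital's Rule repeatedly. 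You instead use the elementary estimate $\sin\theta\leq\Delta/D$ for a set of diameter $\Delta$ at distance at least $D$ from the vertex, exploit concavity of $\phi(x)=\log(\log(x+e))$ to bound the compressed width uniformly in the horizontal position by its worst case at $a=0$, namely $W\leq\log(\log(\lambda^{b}+e))=O(\log b)$, play this against $D\geq b$, and split into just two regimes (high levels $b\geq B_{\epsilon}$, where the conclusion holds irrespective of horizontal position, and the finitely many low levels, where $W$ is uniformly bounded so that lying outside a large ball suffices). This is arguably cleaner: the concavity observation absorbs the paper's Cases 1 and 4 into the $a=0$ worst case, L'H\^{o}pital is avoided entirely, and the diameter bound treats arbitrary $\mathbf{w}_{1},\mathbf{w}_{2}$ directly rather than first reducing to the extreme corners of the rectangle as the paper does. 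Your ratio $W/b$ is exactly the leading behavior of the paper's key quantity $b(q-p)/(pq+b^{2}+b)$, so the two arguments identify the same crux---logarithmic width growth against linear distance growth---but your bookkeeping is lighter; the paper's computation, in exchange, exhibits the precise asymptotics of $\theta$. Your side points also check out: if $\Delta/D<\sin\epsilon\leq1$ then an obtuse angle at $\mathbf{v}_{0}$ is impossible (the chord would be the strictly longest side of the triangle, forcing $\Delta>D$), so the sine bound really does yield $\theta<\epsilon$; and oddness of $\phi$ disposes of tiles meeting the left quadrant, playing the role of the paper's remark that the tiling is symmetric about the vertical axis.
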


\begin{proof}
First note that $%
%TCIMACRO{\U{211d} }%
%BeginExpansion
\mathbb{R}
%EndExpansion
\mathbb{\times\tau}^{+}$ is a Euclidean half-plane, so angle refers to
standard angle measure. Similarly, since $%
%TCIMACRO{\U{211d} }%
%BeginExpansion
\mathbb{R}
%EndExpansion
\mathbb{\times\tau}^{+}$ is a convex subset of $%
%TCIMACRO{\U{211d} }%
%BeginExpansion
\mathbb{R}
%EndExpansion
\times T\left(  \left\vert m\right\vert ,n\right)  $ with $\mathbf{v}_{0}$
corresponding to the origin, a closed $M_{\epsilon}$-ball of $%
%TCIMACRO{\U{211d} }%
%BeginExpansion
\mathbb{R}
%EndExpansion
\times T\left(  \left\vert m\right\vert ,n\right)  $ intersects $%
%TCIMACRO{\U{211d} }%
%BeginExpansion
\mathbb{R}
%EndExpansion
\mathbb{\times\tau}^{+}$ precisely in the closed half-disk of the same radius.
As such, Figure 3 accurately captures the situation.

Since our tiling is symmetric about the vertical axis, we may assume that
$\overline{g}\overline{R}_{0}$ lies in the right-hand quadrant and has
vertices with Euclidean coordinates:

\begin{itemize}
\item $(\log(\log(a(\frac{n}{\left\vert m\right\vert })^{b}+e)),b)$

\item $(\log(\log((a+1)(\frac{n}{\left\vert m\right\vert })^{b}+e)),b)$

\item $(\log(\log(a(\frac{n}{\left\vert m\right\vert })^{b}+e)),b+1)$

\item $(\log(\log((a+1)(\frac{n}{\left\vert m\right\vert })^{b}+e)),b+1)$
\end{itemize}

\noindent where all numbers in the formulae, except possibly $m$, are non-negative.

For simplicity of notation, let $p=\log(\log(a(\frac{n}{|m|})^{b}+e))$ and
$q=\log(\log((a+1)(\frac{n}{|m|})^{b}+e))$. Note that the angle between
$\overline{\mathbf{v}_{0}\mathbf{w}_{1}}$ and $\overline{\mathbf{v}%
_{0}\mathbf{w}_{2}}$ is no larger than the angle between segments
$\overline{\mathbf{v}_{0},(p,b+1)}$ and $\overline{\mathbf{v}_{0},(q,b)}$.

\begin{figure} [h]
	\includegraphics[scale=0.5]{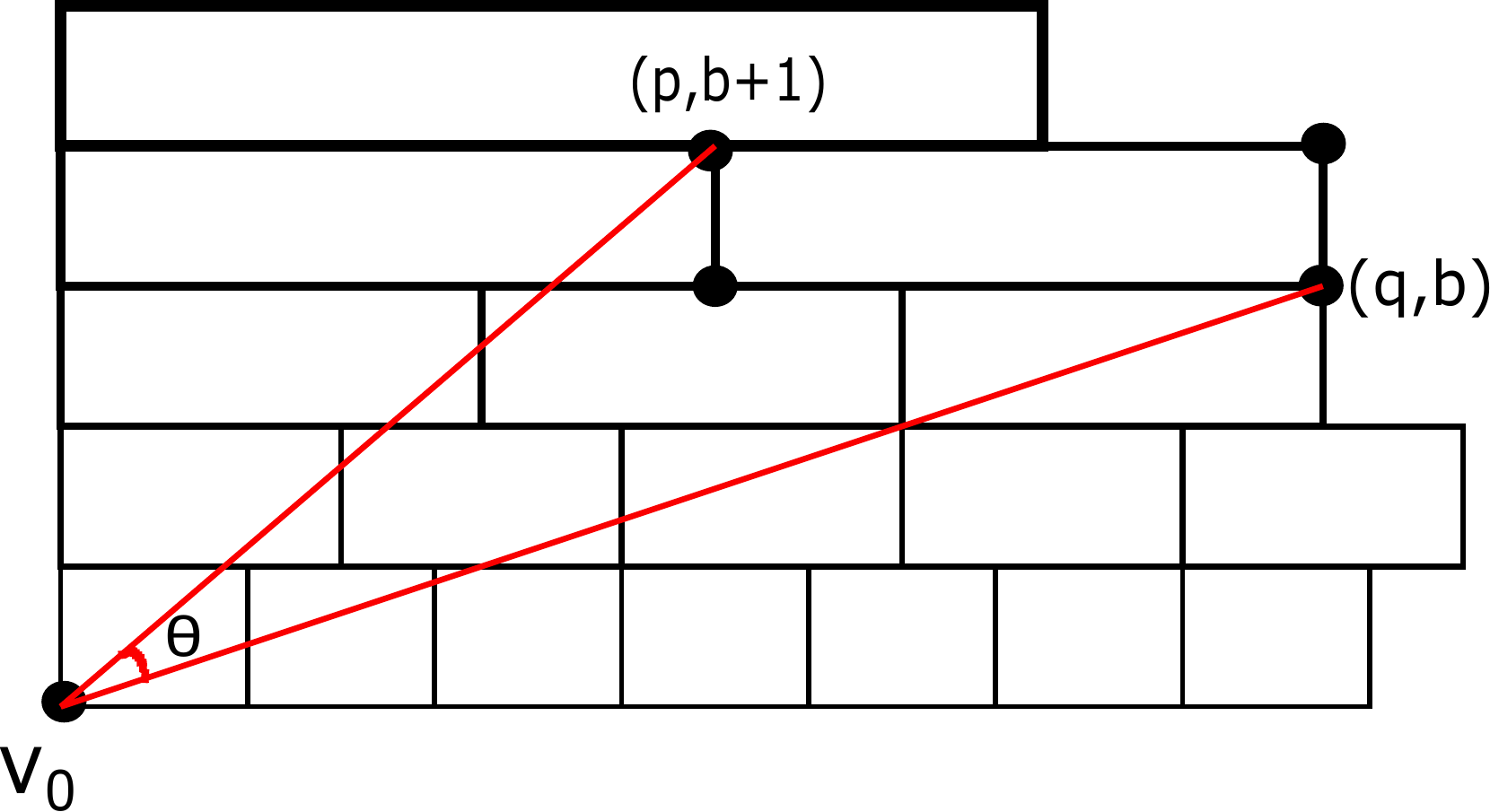}
	\caption{Angle Measurement in Preferred Sheet of $BS(2,3)$}
\end{figure}

Representing that angle by $\theta$, we have the formula.%

\[
\theta=\tan^{-1}\left(  \frac{b+1}{p}\right)  -\tan^{-1}\left(  \frac{b}%
{q}\right)
\]
and by application of a few inverse tangent identities:%

\begin{align*}
\theta &  =\tan^{-1}\left(  \frac{b+1}{p}\right)  +\tan^{-1}\left(  \frac
{-b}{q}\right) \\
&  =\tan^{-1}\left(  \frac{\frac{b+1}{p}+\frac{-b}{q}}{1-\frac{-b(b+1)}{pq}%
}\right)  .
\end{align*}
By algebraic manipulation we then obtain:
\begin{align*}
\theta &  =\tan^{-1}\left(  \frac{(b+1)q-bp}{pq+b^{2}+b}\right) \\
&  =\tan^{-1}\left(  \frac{b(q-p)+q}{pq+b^{2}+b}\right) \\
&  =\tan^{-1}\left(  \frac{b(q-p)}{pq+b^{2}+b}+\frac{q}{pq+b^{2}+b}\right)
\end{align*}

Next we analyze this formula when $a$ and/or $b$ get large. Recall that $p$
and $q$ are both defined in terms of $a$ and $b$. In particular as one of $a$
and $b$ or both get large, $p$ and $q$ get large. Thus, the second term in the
above sum clearly gets small as $a$ or $b$ get large. So, to deduce that
$\theta$ approaches $0$ as$\sqrt{a^{2}+b^{2}}$ gets large, we need only check
that the first term in that sum goes to zero.

We direct our attention to proving that the term
\begin{equation}
\frac{b(q-p)}{pq+b^{2}+b} \tag{\#}\label{Key quantity}%
\end{equation}
approaches $0$ as $\sqrt{a^{2}+b^{2}}$ tends to infinity.

Recall that%

\begin{align*}
b(q-p)  &  =b(\log(\log((a+1)\left(\frac{n}{m}\right)^{b}+e))-\log(\log(a\left(\frac{n}%
{m}\right)^{b}+e)))\\
&  =b\log\left(  \frac{\log((a+1)\left(  \frac{n}{m}\right)  ^{b}+e)}%
{\log(a\left(  \frac{n}{m}\right)  ^{b}+e)}\right)
\end{align*}

We split our analysis into four cases, applying L'H\^{o}pital's Rule when
appropriate. For simplicity of notation, we assume $m>0$; if not, replace $m$ by $|m|$ in 
the calculations below: \medskip

\noindent\underline{Case 1:} $b$\emph{ is bounded and }$a$\emph{ gets
large.}\medskip%

\[
\lim_{a\rightarrow\infty}\frac{\log\left(  (a+1)\left(  \frac{n}{m}\right)
^{b}+e\right)  }{\log\left(  a\left(  \frac{n}{m}\right)  ^{b}+e\right)
}=\lim_{a\rightarrow\infty}\frac{\left(  \frac{n}{m}\right)  ^{b}%
}{(a+1)\left(  \frac{n}{m}\right)  ^{b}+e}\cdot\frac{a\left(  \frac{n}%
{m}\right)  ^{b}+e}{\left(  \frac{n}{m}\right)  ^{b}}=1
\]
Thus,
\[
b\log\left(  \frac{\log\left(  (a+1)\left(  \frac{n}{m}\right)  ^{b}+e\right)
}{\log\left(  a\left(  \frac{n}{m}\right)  ^{b}+e\right)  }\right)
\rightarrow0
\]
\medskip

\noindent\underline{Case 2:} $a=0$ \emph{and }$b$\emph{ gets large.}\medskip\
\[
b\log\left(  \log\left(  \left(  \frac{n}{m}\right)  ^{b}+e\right)  \right)
\]%
\[
\sim b\log\left(  b\right)
\]
and since there is a $b^{2}$ term in the denominator, (\ref{Key quantity})
approaches $0$, as desired.\medskip

\noindent\underline{Case 3:} $a$\emph{ is bounded and }$b$\emph{ gets
large.}\medskip

\noindent Then\
\begin{align*}
\lim_{b\rightarrow\infty}\frac{\log\left(  (a+1)\left(  \frac{n}{m}\right)
^{b}+e\right)  }{\log\left(  a\left(  \frac{n}{m}\right)  ^{b}+e\right)  } &
=\lim_{b\rightarrow\infty}\frac{(a+1)\left(  \frac{n}{m}\right)  ^{b}%
\log\left(  \frac{n}{m}\right)  }{(a+1)\left(  \frac{n}{m}\right)  ^{b}%
+e}\cdot\frac{a\left(  \frac{n}{m}\right)  ^{b}+e}{a\left(  \frac{n}%
{m}\right)  ^{b}\log\left(  \frac{n}{m}\right)  }\\
&  =\lim_{b\rightarrow\infty}\frac{(a+1)\left(  \frac{n}{m}\right)  ^{b}%
\log\left(  \frac{n}{m}\right)  }{a\left(  \frac{n}{m}\right)  ^{b}\log\left(
\frac{n}{m}\right)  }\cdot\frac{a\left(  \frac{n}{m}\right)  ^{b}%
+e}{(a+1)\left(  \frac{n}{m}\right)  ^{b}+e}\\
&  =\frac{a+1}{a}\cdot\frac{a}{a+1}\\
&  =1
\end{align*}
So
\[
\log\left(  \frac{\log\left(  (a+1)\left(  \frac{n}{m}\right)  ^{b}+e\right)
}{\log\left(  a\left(  \frac{n}{m}\right)  ^{b}+e\right)  }\right)
\rightarrow0
\]
and hence,
\[
b\log\left(  \frac{\log\left(  (a+1)\left(  \frac{n}{m}\right)  ^{b}+e\right)
}{\log\left(  a\left(  \frac{n}{m}\right)  ^{b}+e\right)  }\right)
\]
grows slower than $b$. It follows easily that (\ref{Key quantity}) again
approaches $0$.\medskip

\noindent\underline{Case 4:} $a$\emph{ and }$b$\emph{ both get large.}\medskip

First notice that%

\[
\frac{\log\left(  (a+1)\left(  \frac{n}{m}\right)  ^{b}+e\right)  }%
{\log\left(  a\left(  \frac{n}{m}\right)  ^{b}+e\right)  }\leq\frac
{\log\left(  2a\left(  \frac{n}{m}\right)  ^{b}+e\right)  }{\log\left(
a\left(  \frac{n}{m}\right)  ^{b}+e\right)  }%
\]
Set $x=a\left(  \frac{n}{m}\right)  ^{b}$. Then as $a,b\rightarrow\infty$,
$x\rightarrow\infty$, hence:
\[
\lim_{a,b\rightarrow\infty}\frac{\log\left(  2a\left(  \frac{n}{m}\right)
^{b}+e\right)  }{\log\left(  a\left(  \frac{n}{m}\right)  ^{b}+e\right)
}=\lim_{x\rightarrow\infty}\frac{\log\left(  2x+e\right)  }{\log(x+e)}%
=\lim_{x\rightarrow\infty}\frac{2}{2x+e}\cdot\frac{x+e}{1}=1
\]
And thus,
\[
b\log\left(  \frac{\log\left(  (a+1)\left(  \frac{n}{m}\right)  ^{b}+e\right)
}{\log\left(  a\left(  \frac{n}{m}\right)  ^{b}+e\right)  }\right)
\]
grows no faster than $b$. Again, we conclude that (\ref{Key quantity})
approaches $0$.

\end{proof}

Now suppose $\overline{g}\overline{R}_{0}$ is an arbitrary tile of $%
%TCIMACRO{\U{211d} }%
%BeginExpansion
\mathbb{R}
%EndExpansion
\times T\left(  \left\vert m\right\vert ,n\right)  $. We may choose an edge
ray $\rho$ in $T\left(  \left\vert m\right\vert ,n\right)  $ emanating from
$\nu_{0}$ so that $\overline{g}\overline{R}_{0}$ lies in the sheet $%
%TCIMACRO{\U{211d} }%
%BeginExpansion
\mathbb{R}
%EndExpansion
\times\rho$, which inherits the geometry of a Euclidean half-plane with
$\mathbf{v}_{0}$ at the origin. For points $\mathbf{w}_{1},\mathbf{w}_{2}%
\in\overline{g}\overline{R}_{0}$ we can measure the angle between segments
$\overline{\mathbf{v}_{0}\mathbf{w}_{1}}$ and $\overline{\mathbf{v}%
_{0}\mathbf{w}_{2}}$ in this half-plane. That measure does not depend on the
sheet chosen.

\begin{corollary}
\label{Corollary: the angle lemma}For each $\epsilon>0$, there exists
$N_{\epsilon}>0$ such that if $\overline{g}\overline{R}_{0}$ is a
$\overline{BS(m,n)}$-tile of $%
%TCIMACRO{\U{211d} }%
%BeginExpansion
\mathbb{R}
%EndExpansion
\times T\left(  \left\vert m\right\vert ,n\right)  $ lying outside the closed
$N_{\epsilon}$-ball of $%
%TCIMACRO{\U{211d} }%
%BeginExpansion
\mathbb{R}
%EndExpansion
\times T\left(  \left\vert m\right\vert ,n\right)  $ centered at
$\mathbf{v}_{0}$, and if $\mathbf{w}_{1},\mathbf{w}_{2}\in\overline
{g}\overline{R}_{0}$, then the angle between segments $\overline
{\mathbf{v}_{0}\mathbf{w}_{1}}$ and $\overline{\mathbf{v}_{0}\mathbf{w}_{2}}$
is less than $\epsilon$.
\end{corollary}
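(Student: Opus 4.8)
The plan is to deduce the corollary from Lemma~\ref{Theorem: Scaling Function Property} by reducing an arbitrary tile to the already-understood situation in the preferred positive sheet, using three ingredients: the fact that every sheet $\mathbb{R}\times\rho$ is isometric to the preferred positive sheet $\mathbb{R}\times\tau^{+}$ by a height-preserving isometry fixing $\mathbf{v}_{0}$; the structural observation (recorded in Section~\ref{Subsection: standard action}) that the tiling induced on any sheet refines that of the preferred positive sheet, up to a horizontal translation; and the elementary monotonicity principle that the angle subtended at $\mathbf{v}_{0}$ by a set is at most the angle subtended by any set containing it. Since the angle measurement does not depend on the sheet chosen, it suffices to carry out all estimates in the single half-plane $\mathbb{R}\times\tau^{+}$.

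Concretely, I would first fix, for the sheet $\mathbb{R}\times\rho$ containing $\overline{g}\overline{R}_{0}$, the isometry onto $\mathbb{R}\times\tau^{+}$ sending $\mathbf{v}_{0}$ to $\mathbf{v}_{0}$ and preserving heights (the $T\left( \left\vert m\right\vert ,n\right)$-coordinate). Under this identification $\overline{g}\overline{R}_{0}$ lies at some height $[b,b+1]$ and, because the induced tiling refines that of the preferred sheet (possibly after a horizontal shift), is contained in the union $\overline{T}$ of at most two horizontally adjacent $\overline{BS(m,n)}$-tiles of $\mathbb{R}\times\tau^{+}$ lying at the same height $[b,b+1]$. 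Applying Lemma~\ref{Theorem: Scaling Function Property} with $\epsilon/2$ in place of $\epsilon$ to each of these (at most two) tiles and adding the resulting bounds shows that $\overline{T}$ subtends an angle less than $\epsilon$ at $\mathbf{v}_{0}$; by monotonicity, so does $\overline{g}\overline{R}_{0}$. For this to run I need the containing region $\overline{T}$ to lie outside the $M_{\epsilon/2}$-ball demanded by the lemma, and this is exactly where $N_{\epsilon}$ must be chosen larger than $M_{\epsilon/2}$.

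The main obstacle is precisely this distance bookkeeping. Because $\overline{T}$ is a \emph{superset} of $\overline{g}\overline{R}_{0}$, its inner (left) edge can sit closer to $\mathbf{v}_{0}$ than that of $\overline{g}\overline{R}_{0}$, so ``$\overline{g}\overline{R}_{0}$ outside the $N_{\epsilon}$-ball'' does not by itself force ``$\overline{T}$ outside the $M_{\epsilon/2}$-ball.'' I would resolve this by splitting on the height $b$. If $b\geq M_{\epsilon/2}$, then every point of $\overline{T}$ already has distance at least $b\geq M_{\epsilon/2}$ from $\mathbf{v}_{0}$. If instead $b$ is bounded, then the compressed horizontal width of any tile at height $b$ is bounded by the constant $\log(\log((n/|m|)^{b}+e))$, so the left edge of $\overline{T}$ can sit only a bounded horizontal distance to the left of that of $\overline{g}\overline{R}_{0}$; choosing $N_{\epsilon}$ large enough (as a function of $M_{\epsilon/2}$ and this width bound) then forces $\overline{T}$ outside the $M_{\epsilon/2}$-ball. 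The only genuinely delicate point is controlling this horizontal slippage introduced by passing to the larger containing tile and by the translation relating a general positive sheet to the preferred one; everything else reduces to monotonicity of the subtended angle together with a direct appeal to Lemma~\ref{Theorem: Scaling Function Property}.
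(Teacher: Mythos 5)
Your proposal is correct and follows essentially the same route as the paper's proof: identify an arbitrary sheet with the preferred positive sheet, enclose the tile in a pair of side-by-side template tiles (using the refinement property of the tilings), apply Lemma~\ref{Theorem: Scaling Function Property} with $\epsilon/2$ to each, and add the two subtended angles. The only cosmetic difference is the distance bookkeeping: the paper sets $N_{\epsilon}=M_{\epsilon/2}+R$, where $R$ is a uniform bound guaranteeing that any tile meeting $B\left(\mathbf{v}_{0},M_{\epsilon/2}\right)$ lies in $B\left(\mathbf{v}_{0},M_{\epsilon/2}+R\right)$, which accomplishes exactly what your case split on the height $b$ does.
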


\begin{proof}
Let $\epsilon>0$ be fixed, and apply Lemma
\ref{Theorem: Scaling Function Property} to obtain $M_{\epsilon/2}$ so large
that if $\overline{g}\overline{R}_{0}$ is a $\overline{BS(m,n)}$-tile in the
preferred positive sheet $%
%TCIMACRO{\U{211d} }%
%BeginExpansion
\mathbb{R}
%EndExpansion
\times\tau^{+}$ and lying outside the closed $M_{\epsilon/2}$-ball of $%
%TCIMACRO{\U{211d} }%
%BeginExpansion
\mathbb{R}
%EndExpansion
\times T\left(  \left\vert m\right\vert ,n\right)  $ centered at
$\mathbf{v}_{0}$, and if $\mathbf{w}_{1},\mathbf{w}_{2}\in\overline
{g}\overline{R}_{0}$, then the angle between $\overline{\mathbf{v}%
_{0}\mathbf{w}_{1}}$ and $\overline{\mathbf{v}_{0}\mathbf{w}_{2}}$ is less
than $\epsilon/2$. Then let $N_{\epsilon}=M_{\epsilon/2}+R$, where $R>0$ is
chosen so large that every $\overline{BS(m,n)}$-tile that intersects $B\left(
\mathbf{v}_{0},M_{\epsilon/2}\right)  $ is contained in $B\left(
\mathbf{v}_{0},M_{\epsilon/2}+R\right)  $.

Now let $\overline{g}\overline{R}_{0}$ be an arbitrary $\overline{BS(m,n)}%
$-tile and $%
%TCIMACRO{\U{211d} }%
%BeginExpansion
\mathbb{R}
%EndExpansion
\times\rho$ a sheet of $T\left(  \left\vert m\right\vert ,n\right)  $
containing $\overline{g}\overline{R}_{0}$.\medskip

\noindent\underline{Case 1: }$%
%TCIMACRO{\U{211d} }%
%BeginExpansion
\mathbb{R}
%EndExpansion
\times\rho$ \emph{is a positive sheet.}\medskip

In the case of the standard tiling of $%
%TCIMACRO{\U{211d} }%
%BeginExpansion
\mathbb{R}
%EndExpansion
\times T\left(  \left\vert m\right\vert ,n\right)  $ (by exponentially growing
rectangles) we observed that the standard tiling of $%
%TCIMACRO{\U{211d} }%
%BeginExpansion
\mathbb{R}
%EndExpansion
\times\rho$ is identical up to horizontal translation to that of $%
%TCIMACRO{\U{211d} }%
%BeginExpansion
\mathbb{R}
%EndExpansion
\times\tau^{+}$. So if the standardly tiled template of $%
%TCIMACRO{\U{211d} }%
%BeginExpansion
\mathbb{R}
%EndExpansion
\times\tau^{+}$ were superimposed on $%
%TCIMACRO{\U{211d} }%
%BeginExpansion
\mathbb{R}
%EndExpansion
\times\rho$, each tile of $%
%TCIMACRO{\U{211d} }%
%BeginExpansion
\mathbb{R}
%EndExpansion
\times\rho$ would be contained in a pair of side-by-side tiles of $%
%TCIMACRO{\U{211d} }%
%BeginExpansion
\mathbb{R}
%EndExpansion
\times\tau^{+}$. This remains true after conjugating the action by $f$.
Therefore the tile $\overline{g}\overline{R}_{0}$ fits within a pair of
side-by-side $\overline{BS\left(  m,n\right)  }$-tiles of $%
%TCIMACRO{\U{211d} }%
%BeginExpansion
\mathbb{R}
%EndExpansion
\times\tau^{+}$ superimposed upon $%
%TCIMACRO{\U{211d} }%
%BeginExpansion
\mathbb{R}
%EndExpansion
\times\rho$. So by the triangle inequality for angle measure and the choice of
$N_{\epsilon}$, the angle between $\overline{\mathbf{v}_{0}\mathbf{w}_{1}}$
and $\overline{\mathbf{v}_{0}\mathbf{w}_{2}}$ is less than $\epsilon$,
provided $\overline{g}\overline{R}_{0}$ lies outside the closed $N_{\epsilon}%
$-ball.\medskip

\noindent\underline{Case 2:} $%
%TCIMACRO{\U{211d} }%
%BeginExpansion
\mathbb{R}
%EndExpansion
\times\rho$ \emph{is arbitrary.}\medskip

As noted previously, the standard tiling of an arbitrary sheet of $%
%TCIMACRO{\U{211d} }%
%BeginExpansion
\mathbb{R}
%EndExpansion
\times T\left(  \left\vert m\right\vert ,n\right)  $ refines the standard
tiling of an appropriately chosen positive sheet. The same then is true for
the $\overline{BS(m,n)}$-tiling. Hence, the general case can be deduced from
Case 1.
\end{proof}

\begin{theorem}
The $\overline{BS(m,n)}$-action on $%
%TCIMACRO{\U{211d} }%
%BeginExpansion
\mathbb{R}
%EndExpansion
\times T\left(  \left\vert m\right\vert ,n\right)  $, together with the visual
compactification $\overline{%
%TCIMACRO{\U{211d} }%
%BeginExpansion
\mathbb{R}
%EndExpansion
\times T\left(  \left\vert m\right\vert ,n\right)  }$ of $%
%TCIMACRO{\U{211d} }%
%BeginExpansion
\mathbb{R}
%EndExpansion
\times T\left(  \left\vert m\right\vert ,n\right)  $ with the $\ell^{2}$
metric, is a $\mathcal{Z}$-structure for $BS\left(  m,n\right)  $.
\end{theorem}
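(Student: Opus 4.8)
The plan is to verify the four conditions of Definition~\ref{Definition: Z-structure} for the pair $(\overline{X},Z)$, where $X=\mathbb{R}\times T(|m|,n)$ carries its $\ell^{2}$ CAT(0) metric, $\overline{X}$ is its visual compactification, and $Z=\partial_{\infty}X$. Conditions (1) and (2) come essentially for free from standard CAT(0) theory: a proper CAT(0) space is contractible and locally contractible, so its visual compactification $\overline{X}$ is a compact AR, and the geodesic ``reeling in'' homotopy exhibits $\partial_{\infty}X$ as a $\mathcal{Z}$-set, exactly as recorded in the examples of Subsection~\ref{Subsection: Z-structures}. For condition (3) I would first observe that the $\overline{BS(m,n)}$-action is proper and cocompact on purely topological grounds: since $\overline{g}=f\circ g\circ f^{-1}$ and $f$ is a homeomorphism, the orbits of the new action are precisely the $f$-images of the orbits of the standard (geometric) action, so properness and cocompactness are inherited. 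To upgrade to an action by isometries --- which is what the definition formally demands --- I would invoke Proposition~\ref{Propostion:Geometric Action}, replacing the $\ell^{2}$ metric by a topologically equivalent proper metric under which $\overline{BS(m,n)}$ acts isometrically. Because conditions (1), (2), and the forthcoming (4) depend only on the topology of $\overline{X}$ and not on the particular admissible metric, this substitution costs nothing.

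All the real work lies in condition (4), the nullity condition, and the heavy analytic lifting has already been done in Corollary~\ref{Corollary: the angle lemma}. First I would reduce the problem using Lemma~\ref{Lemma: Nullity Condition Check}: the tiles $\overline{g}\,\overline{R}_{0}=f(gR_{0})$ cover $X$, so it suffices to check nullity for the single fundamental domain $C=\overline{R}_{0}$. Given an open cover $\mathcal{U}$ of $\overline{X}$, I would apply Lemma~\ref{Lemma: Cover of Boundary} to extract $\delta>0$ so that every cone neighborhood $U(z,\tfrac{1}{\delta},\delta)$ with $z\in\partial X$ lies in a single element of $\mathcal{U}$. The idea is then that a tile lying far from $\mathbf{v}_{0}$ subtends a tiny visual angle, hence is ``thin'' enough to be swallowed by one such cone neighborhood.

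To make that precise I would choose $\epsilon$ small relative to $\delta$ (taking $\epsilon<\delta^{2}$ works) and apply Corollary~\ref{Corollary: the angle lemma} to obtain a radius $N_{\epsilon}$ beyond which every tile subtends angle less than $\epsilon$ at $\mathbf{v}_{0}$. Setting $N=\max\{N_{\epsilon},\tfrac{1}{\delta}\}$, consider any tile $\overline{g}\,\overline{R}_{0}$ lying outside the closed $N$-ball. Such a tile sits inside a single sheet $\mathbb{R}\times\rho$, which is convex and isometric to a Euclidean half-plane with $\mathbf{v}_{0}$ at the origin, so Alexandrov angles there are ordinary Euclidean angles. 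Letting $z\in\partial X$ be the ideal endpoint of the geodesic ray $c$ from $\mathbf{v}_{0}$ through a fixed point of the tile, every point $\mathbf{w}$ of the tile lies at distance exceeding $\tfrac{1}{\delta}$ from $\mathbf{v}_{0}$ and makes an angle less than $\epsilon$ with $c$; projecting to the $\tfrac{1}{\delta}$-sphere then gives $d\big(p_{1/\delta}(\mathbf{w}),c(\tfrac{1}{\delta})\big)\le\tfrac{1}{\delta}\,\epsilon<\delta$, so the whole tile lands inside $U(z,\tfrac{1}{\delta},\delta)$ and hence inside an element of $\mathcal{U}$. Finally, since $X$ is proper and the tiling is locally finite, only finitely many tiles meet the compact $N$-ball; every remaining tile lies outside it and is therefore contained in an element of $\mathcal{U}$. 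This establishes nullity for $\overline{R}_{0}$ and, via Lemma~\ref{Lemma: Nullity Condition Check}, for all compact subsets.

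I expect the one genuinely delicate step to be this last geometric translation --- converting the ``small visual angle'' conclusion of Corollary~\ref{Corollary: the angle lemma} into membership in a cone-topology neighborhood $U(z,\tfrac{1}{\delta},\delta)$. The care needed is in coordinating the two parameters (forcing $\epsilon<\delta^{2}$ so that the angular spread, magnified by the projection radius $\tfrac{1}{\delta}$, stays below $\delta$) and in confirming that the geodesics to both $z$ and the tile points remain in a common Euclidean sheet, so that the projection estimate is literally a Euclidean chord-length computation. Once that bookkeeping is in place, the remaining pieces are routine applications of the lemmas already proved.
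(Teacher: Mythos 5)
Your proposal is correct and takes essentially the same route as the paper's proof: the same reduction via Lemma~\ref{Lemma: Nullity Condition Check} and Lemma~\ref{Lemma: Cover of Boundary}, followed by Corollary~\ref{Corollary: the angle lemma} to convert small visual angle at $\mathbf{v}_{0}$ into containment of each distant tile in a single cone neighborhood $U\left(z,\frac{1}{\delta},\delta\right)$, with $N>\frac{1}{\delta}$. The only cosmetic difference is your arc-length chord bound $d\left(p_{1/\delta}(\mathbf{w}),z(1/\delta)\right)\leq\frac{\epsilon}{\delta}<\delta$ (using $\epsilon<\delta^{2}$) in place of the paper's law-of-cosines expression $\sqrt{(2/\delta^{2})\left(1-\cos\left(\angle\mathbf{w}_{0}\mathbf{v}_{0}\mathbf{w}\right)\right)}$, which yields the same estimate.
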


\begin{proof}
We need only verify the nullity condition of Definition
\ref{Definition: Z-structure}. Toward that end let $\mathcal{U}$ be an open
cover of $\overline{%
%TCIMACRO{\U{211d} }%
%BeginExpansion
\mathbb{R}
%EndExpansion
\times T\left(  \left\vert m\right\vert ,n\right)  }$, and apply Lemma
\ref{Lemma: Cover of Boundary} to obtain a $\delta>0$ with the property that
every basic open subset of $\overline{%
%TCIMACRO{\U{211d} }%
%BeginExpansion
\mathbb{R}
%EndExpansion
\times T\left(  \left\vert m\right\vert ,n\right)  }$ of the form $U\left(
z,1/\delta,\delta\right)  $, with $z\in\partial_{\infty}(%
%TCIMACRO{\U{211d} }%
%BeginExpansion
\mathbb{R}
%EndExpansion
\times T\left(  \left\vert m\right\vert ,n\right)  )$, is contained in some
element of $\mathcal{U}$. By Lemma \ref{Lemma: Nullity Condition Check} and
properness of the action, it then suffices to find $N>0$ so that every
$\overline{BS(m,n)}$-translate $\overline{g}\overline{R}_{0}$ of $\overline
{R}_{0}$ which lies outside $\overline{B\left(  \mathbf{v}_{0},N\right)  }$ is
contained in $U\left(  z,\frac{1}{\delta},\delta\right)  $ for some
$z\in\partial_{\infty}(%
%TCIMACRO{\U{211d} }%
%BeginExpansion
\mathbb{R}
%EndExpansion
\times T\left(  \left\vert m\right\vert ,n\right)  )$.

Suppose $\overline{g}\overline{R}_{0}$ lies outside $\overline{B\left(
\mathbf{v}_{0},N\right)  }$, where $N$ is yet to be specified. Choose a sheet
$%
%TCIMACRO{\U{211d} }%
%BeginExpansion
\mathbb{R}
%EndExpansion
\times\rho$ containing $\overline{g}\overline{R}_{0}$ and a point
$\mathbf{w}_{0}\mathbf{\in}\overline{g}\overline{R}_{0}$. The Euclidean ray
$\overrightarrow{\mathbf{v}_{0}\mathbf{w}_{0}}$ in $%
%TCIMACRO{\U{211d} }%
%BeginExpansion
\mathbb{R}
%EndExpansion
\times\rho$ is an element of $\partial_{\infty}(%
%TCIMACRO{\U{211d} }%
%BeginExpansion
\mathbb{R}
%EndExpansion
\times T\left(  \left\vert m\right\vert ,n\right)  )$; call it $z$. Its
projection onto the $\left(  1/\delta\right)  $-sphere of $%
%TCIMACRO{\U{211d} }%
%BeginExpansion
\mathbb{R}
%EndExpansion
\times T\left(  \left\vert m\right\vert ,n\right)  $ is the point $z\left(
1/\delta\right)  $ where the ray $\overrightarrow{\mathbf{v}_{0}\mathbf{w}}$
intersects the semicircle of radius $1/\delta$ in $%
%TCIMACRO{\U{211d} }%
%BeginExpansion
\mathbb{R}
%EndExpansion
\times\rho$. For any other point $\mathbf{w\in}\overline{g}\overline{R}_{0}$
let $p_{1/\delta}\left(  \mathbf{w}\right)  $ denote the projection onto the
$\left(  1/\delta\right)  $-sphere. By the law of cosines, the distance
between $p_{1/\delta}\left(  \mathbf{w}\right)  $ and $z\left(  1/\delta
\right)  $ is $\sqrt{(2/\delta^{2})\left(  1-\cos\left(  \angle\mathbf{w}%
_{0}\mathbf{v}_{0}\mathbf{w}\right)  \right)  }$. Since $\delta$ is constant,
this distance can be made arbitrarily small (in particular $<\delta$), by
forcing $\angle\mathbf{w}_{0}\mathbf{v}_{0}\mathbf{w}$ to be small. By
Corollary \ref{Corollary: the angle lemma}, this can be arranged by making $N$
sufficiently large. Lastly, one should be sure to choose $N>1/\delta$.
\end{proof}

\begin{corollary}
Every generalized Baumslag-Solitar group admits a $\mathcal{Z}$-structure.
\end{corollary}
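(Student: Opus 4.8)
The plan is to reduce the statement to the two concrete situations handled by the main theorem, using Whyte's trichotomy (Theorem~\ref{Theorem: Whyte Classification}) together with the boundary swapping trick (Proposition~\ref{Boundary Swapping Theorem}). Let $G=\pi_{1}\Gamma$ be a generalized Baumslag-Solitar group. As recorded in Subsection~\ref{Subsection: Generalized Baumslag-Solitar groups}, $G$ acts properly and cocompactly on $\mathbb{R}\times T$, where $T$ is the Bass-Serre tree of its graph-of-groups decomposition. Since $T$ is a locally finite tree, $\mathbb{R}\times T$ is contractible, finite-dimensional, locally contractible, and proper---in particular a proper metric AR---so the hypotheses of Proposition~\ref{Boundary Swapping Theorem} are met on the $G$-side in every case.

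By Theorem~\ref{Theorem: Whyte Classification}, exactly one of three possibilities holds. If $G=BS(1,n)$ for some $n>1$, then $G$ is an ordinary Baumslag-Solitar group and Theorem~\ref{Theorem: Boundary BS(m,n)} supplies a $\mathcal{Z}$-structure (indeed an $\mathcal{EZ}$-structure) directly. If $G$ is quasi-isometric to $BS(2,3)$, then because $BS(2,3)$ acts geometrically on the proper metric AR $\mathbb{R}\times T(2,3)$ and admits a $\mathcal{Z}$-structure by Theorem~\ref{Theorem: Boundary BS(m,n)}, Proposition~\ref{Boundary Swapping Theorem} transfers that $\mathcal{Z}$-boundary to $G$.

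The remaining case is when $G$ contains a finite-index subgroup $H\cong\mathbb{Z}\times F_{n}$. Here I would first note that $H$ is a CAT(0) group: it acts geometrically on $\mathbb{R}\times T'$ with the $\ell^{2}$ metric, where $T'$ is a tree on which $F_{n}$ acts geometrically, and by the first item of the Example following Definition~\ref{Definition: Z-structure} the visual compactification yields a $\mathcal{Z}$-structure for $H$. A finite-index subgroup is quasi-isometric to its ambient group, so $G$ and $H$ are quasi-isometric, and both act geometrically on proper metric ARs; one further application of Proposition~\ref{Boundary Swapping Theorem} then produces a $\mathcal{Z}$-structure for $G$.

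I do not anticipate a serious obstacle: the corollary is essentially a bookkeeping step that packages the main theorem with Whyte's classification. The only points requiring care are confirming that the quasi-isometries invoked are genuine (for which the finite-index-implies-quasi-isometric principle and Whyte's statement suffice) and that each target space---$\mathbb{R}\times T$, $\mathbb{R}\times T(2,3)$, and $\mathbb{R}\times T'$---is a proper metric AR, which follows since each is a product of $\mathbb{R}$ with a locally finite tree.
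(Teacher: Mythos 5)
Your proposal is correct and follows essentially the same route as the paper: Whyte's trichotomy (Theorem~\ref{Theorem: Whyte Classification}) combined with the boundary swapping trick (Proposition~\ref{Boundary Swapping Theorem}), using the CAT(0) structure on $\mathbb{Z}\times F_{n}$ in the first case and Theorem~\ref{Theorem: Boundary BS(m,n)} in the other two, exactly as the paper outlines in Section~\ref{Subsection: Generalized Baumslag-Solitar groups}. Your added verifications (that $\mathbb{R}\times T$ is a proper metric AR and that finite index implies quasi-isometry) merely make explicit details the paper leaves implicit.
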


\begin{proof}
This argument was provided in Section
\ref{Subsection: Generalized Baumslag-Solitar groups}.
\end{proof}

\section{$\mathcal{EZ}$-Structures on Baumslag-Solitar Groups}

We complete the proof of Theorem \ref{Theorem: Boundary BS(m,n)} by showing
that the $\overline{BS(m,n)}$-action on $%
%TCIMACRO{\U{211d} }%
%BeginExpansion
\mathbb{R}
%EndExpansion
\times T\left(  \left\vert m\right\vert ,n\right)  )$ extends to the visual
compactification $\overline{%
%TCIMACRO{\U{211d} }%
%BeginExpansion
\mathbb{R}
%EndExpansion
\times T\left(  \left\vert m\right\vert ,n\right)  )}$. Since this action is
not by isometries and, more specifically, this action does not send rays to
rays, this observation is not immediate.

Note that, since $T\left(  \left\vert m\right\vert ,n\right)  $ is a
Bass-Serre tree for $BS\left(  m,n\right)  $, there is a natural action by
isometries of $BS\left(  m,n\right)  $ on $T\left(  \left\vert m\right\vert
,n\right)  )$. As such, this action extends to the visual compactification of
$T\left(  \left\vert m\right\vert ,n\right)  )$ (which is just its end-point
compactification) in the obvious way. As noted previously, $\partial_{\infty}(%
%TCIMACRO{\U{211d} }%
%BeginExpansion
\mathbb{R}
%EndExpansion
\times T\left(  \left\vert m\right\vert ,n\right)  )$ is the suspension
$S^{0}\ast\partial_{\infty}T\left(  \left\vert m\right\vert ,n\right)  $,
which we may parameterize as the quotient space $[0,\pi]\times\partial
_{\infty}T\left(  \left\vert m\right\vert ,n\right)  /\sim$. Here the
equivalence relation identifies the sets $\left\{  0\right\}  \times
\partial_{\infty}T\left(  \left\vert m\right\vert ,n\right)  $ and $\left\{
\pi\right\}  \times\partial_{\infty}T\left(  \left\vert m\right\vert
,n\right)  $ to the right- and left-hand \emph{suspension points}, which we
denote $\mathbf{R}$ and $\mathbf{L}$. Each edge path ray $\rho$ in $T\left(
\left\vert m\right\vert ,n\right)  $ emanating from $v_{0}$ uniquely
determines both a point of $\partial_{\infty}T\left(  \left\vert m\right\vert
,n\right)  $ and a sheet $%
%TCIMACRO{\U{211d} }%
%BeginExpansion
\mathbb{R}
%EndExpansion
\times\rho\subseteq%
%TCIMACRO{\U{211d} }%
%BeginExpansion
\mathbb{R}
%EndExpansion
\times T\left(  \left\vert m\right\vert ,n\right)  )$. The great semicircle
$C_{\rho}$ of rays in $%
%TCIMACRO{\U{211d} }%
%BeginExpansion
\mathbb{R}
%EndExpansion
\times\rho$ based at $\mathbf{v}_{0}$ (parameterized by the angles they make
with the positive $x$-axis), trace out the set $[0,\pi]\times\{\rho\}\subseteq
S^{0}\ast\partial_{\infty}T\left(  \left\vert m\right\vert ,n\right)  $.

Given a homeomorphism $h:\partial_{\infty}T\left(  \left\vert m\right\vert
,n\right)  \rightarrow\partial_{\infty}T\left(  \left\vert m\right\vert
,n\right)  $, the \emph{suspension of }$h$ is the homeomorphism of $S^{0}%
\ast\partial_{\infty}T\left(  \left\vert m\right\vert ,n\right)  $ which fixes
$\mathbf{R}$ and $\mathbf{L}$ and takes each great semicircle $C_{\rho}$ to
$C_{h\left(  \rho\right)  }$ in a parameter-preserving manner. The
\emph{reflected suspension of }$h$ switches $\mathbf{R}$ and $\mathbf{L}$ and
takes the point on $C_{\rho}$ with parameter $\theta$ to the point on
$C_{h\left(  \rho\right)  }$ with parameter $\pi-\theta$. We will complete the
proof of Theorem \ref{Theorem: Boundary BS(m,n)} for cases $m>0$, by proving
the following proposition.

\begin{proposition}
\label{Proposition: Extension of action to boundary (m>0)}For $m>0$, the
suspension of the $BS(m,n)$-action on $\partial_{\infty}T\left(  \left\vert
m\right\vert ,n\right)  $ extends the $\overline{BS(m,n)}$-action on $%
%TCIMACRO{\U{211d} }%
%BeginExpansion
\mathbb{R}
%EndExpansion
\times T\left(  \left\vert m\right\vert ,n\right)  )$.
\end{proposition}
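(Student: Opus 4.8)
The plan is to define, for each $g\in BS(m,n)$, a self-map $\widehat{g}$ of $\overline{\mathbb{R}\times T(|m|,n)}$ that agrees with $\overline{g}$ on the interior and with the suspension of $g|_{\partial_\infty T(|m|,n)}$ on the boundary sphere $S^0\ast\partial_\infty T(|m|,n)$, and then to verify that $\widehat{g}$ is continuous. Since $\overline{\mathbb{R}\times T(|m|,n)}$ is compact Hausdorff, a continuous bijection is automatically a homeomorphism; running the same argument for $g^{-1}$ shows $\widehat{g}$ is a homeomorphism. As the interior is dense, each $\widehat{g}$ is \emph{the} continuous extension of $\overline{g}$, so because $g\mapsto\overline{g}$ is a homomorphism (conjugation) and $g\mapsto(\text{suspension of }g|_{\partial_\infty T})$ is functorial, uniqueness of continuous extensions gives $\widehat{gh}=\widehat{g}\,\widehat{h}$ for free. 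Everything thus reduces to continuity of a single $\widehat{g}$ at boundary points, continuity at interior points being immediate.

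The structural observation that drives the estimate is that $f$ preserves the $T$-fibers, so $\overline{g}=f\circ g\circ f^{-1}$ does too; writing points as $(X,y)$ with $X$ the $\mathbb{R}$-coordinate and $y\in T(|m|,n)$, we have $\overline{g}(X,y)=(F_g(X,y),\,g\cdot y)$, where $g$ acts on $T(|m|,n)$ by the isometric Bass--Serre action. Two facts about the two coordinates make the suspension the correct boundary behavior. First, for fixed $g$ the tree height changes boundedly, since $|d_T(v_0,g\cdot y)-d_T(v_0,y)|\le d_T(v_0,g\cdot v_0)$. Second---and this is the point of the $\log\log$ choice of $f$---the horizontal coordinate is asymptotically fixed: $F_g(X,y)-X\to 0$ as $|X|\to\infty$, uniformly in $y$, while $F_g$ carries bounded sets to bounded sets. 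I would prove the horizontal estimate first for the generators $s^{\pm1},t^{\pm1}$ by direct computation (for $t$ in a positive sheet $\overline{t}(X,y)=(\log\log((n/m)(\exp\exp X-e)+e),\,t\cdot y)$, whose horizontal coordinate is $X+o(1)$ as $X\to+\infty$), and then propagate it to all $g$ by composition, using that $F_h(X)\to\pm\infty$ with the sign of $X$, so that $F_g(F_h(X),\cdot)-F_h(X)\to0$ and $F_h(X)-X\to0$ add up to $o(1)$.

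With these two estimates in hand I would verify continuity of $\widehat{g}$ at each type of boundary point in the cone topology. For a non-polar point $(\theta,\rho)$ with $\theta\in(0,\pi)$, a sequence $P_k\to(\theta,\rho)$ has tree-end converging to $\rho$ and angle subtended at $\mathbf{v}_0$ converging to $\theta$; applying $\overline{g}$ sends the tree-end to $g\rho$ (continuity of $g$ on ends), and since the height is shifted by at most $d_T(v_0,g\cdot v_0)$ while $X$ moves by $o(1)$, the angle subtended at $\mathbf{v}_0$ still converges to $\theta$ (a fixed, hence eventually negligible, basepoint displacement does not affect angles at infinity). Hence $\overline{g}(P_k)\to(\theta,g\rho)$, matching the suspension. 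For the poles, a sequence approaching $\mathbf{R}$ has $X_k\to+\infty$ with $\text{height}/|X_k|\to0$; since $m>0$ the generators, and hence $\overline{g}$, asymptotically preserve the sign of $X$, so $F_g(X_k,\cdot)\to+\infty$ while the height stays comparatively small, giving $\overline{g}(P_k)\to\mathbf{R}$, and symmetrically for $\mathbf{L}$. This is exactly where the hypothesis $m>0$ enters: for $m<0$ the $s$-edges at odd heights reverse orientation, $\overline{g}$ can flip the sign of $X$, and $\mathbf{R},\mathbf{L}$ get interchanged, which is why that case requires the \emph{reflected} suspension and is treated separately.

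The step I expect to be the main obstacle is making the angle control uniform enough for the cone topology rather than merely sequential---in particular, producing for a given basic neighborhood $U(c,r,\epsilon)$ of $\widehat{g}(\xi)$ an explicit neighborhood of $\xi$ that $\widehat{g}$ carries inside it, uniformly across all sheets $\rho$ and as $\theta\to\pi/2$ or $\theta\to0,\pi$. The delicate regime is the transition near $\theta=\pi/2$ (where $|X|$ stays bounded while the height runs off to infinity) and near the poles (where the roles reverse); there the two estimates above must be combined carefully, and the rebasing from $v_0$ to $g\cdot v_0$ absorbed. I would handle this by quantifying ``angles at infinity are basepoint independent'' into an explicit comparison within each Euclidean half-plane sheet, much as in the proof of Corollary \ref{Corollary: the angle lemma}, and by invoking the uniform tile-angle bound of that corollary to reduce an arbitrary sheet to the preferred positive sheet.
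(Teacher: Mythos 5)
Your proposal is correct, and its computational core is the same as the paper's (reduce to the generators $s,t$ via functoriality of suspension, then exploit the $\log\log$ asymptotics sheet by sheet in Euclidean half-plane coordinates), but the key estimate and the packaging differ in a worthwhile way. The paper never isolates your additive estimate $F_g(X,y)-X\to 0$; instead it writes down the explicit image of each rational-slope ray $\overrightarrow{\mathbf{r}_{p/q}}$ under $\overline{s}$ and $\overline{t}$ and shows, by a L'H\^{o}pital computation on the \emph{ratio} of coordinates, that the image is a topological ray asymptotic to geodesics of the same slope $p/q$; from this it concludes that each restriction $\mathbb{R}\times\rho\to\mathbb{R}\times\rho'$ extends parameter-preservingly from $C_\rho$ to $C_{\rho'}$, with the rebasing issue absorbed by the "generalized sheet" strip observation rather than your bounded-height-displacement estimate. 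Your additive estimate is strictly stronger than the paper's ratio limit, it propagates to arbitrary $g$ by composition (the paper only treats generators and lets suspension functoriality do the rest, which suffices), it handles all boundary directions at once without the implicit appeal to density of rational slopes, and it makes explicit the continuity verification of the glued map that the paper compresses into "from this it can be seen." One correction to your self-assessment: the step you flag as the main obstacle --- upgrading sequential angle control to cone-topology neighborhoods, uniformly in $\rho$ and $\theta$ --- is not actually an obstacle, because the visual compactification of a proper CAT(0) space is compact and metrizable, so sequential continuity of $\widehat{g}$ at each boundary point (which your two estimates already deliver, including the transition regimes near $\theta=\pi/2$ and the poles) suffices; alternatively one can produce explicit basic neighborhoods by the law-of-cosines computation already used in the nullity-condition theorem together with Corollary \ref{Corollary: the angle lemma}. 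Your observation that $m>0$ enters exactly through sign-preservation of the $\mathbb{R}$-coordinate, forcing the reflected suspension when $m<0$, matches the paper's treatment precisely.
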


\begin{remark}
Cases where $m<0$ require the use of reflected suspensions; we will handle
those cases after completing Proposition
\ref{Proposition: Extension of action to boundary (m>0)}.
\end{remark}

Proof of Proposition \ref{Proposition: Extension of action to boundary (m>0)}
requires some additional terminology and notation. Thus far we have understood
the space $%
%TCIMACRO{\U{211d} }%
%BeginExpansion
\mathbb{R}
%EndExpansion
\times T\left(  \left\vert m\right\vert ,n\right)  )$ as a union of sheets,
each with a common origin $\mathbf{v}_{0}=\left(  0,v_{0}\right)  $ and a
common \textquotedblleft edge\textquotedblright, $%
%TCIMACRO{\U{211d} }%
%BeginExpansion
\mathbb{R}
%EndExpansion
\times\left\{  v_{0}\right\}  $. As such, each sheet has a natural system of
Euclidean \emph{local coordinates}, where a point $\left(  x,y\right)  \in%
%TCIMACRO{\U{211d} }%
%BeginExpansion
\mathbb{R}
%EndExpansion
\times\rho$ is represented by the pair of real numbers $\left(  x,d\right)  $,
where $d$ is the distance along $\rho$ from $v_{0}$ to $y$.

Since the actions of $B\left(  m,n\right)  $ on $%
%TCIMACRO{\U{211d} }%
%BeginExpansion
\mathbb{R}
%EndExpansion
\times T\left(  \left\vert m\right\vert ,n\right)  )$ (standard and
conjugated) do not send sheets to sheets, it is useful to expand our
perspective slightly. If $\sigma$ is an arbitrary edge path ray in $T\left(
\left\vert m\right\vert ,n\right)  $ emanating from a vertex $v$, then $%
%TCIMACRO{\U{211d} }%
%BeginExpansion
\mathbb{R}
%EndExpansion
\times\sigma$ is again convex and isometric to a Euclidean half-plane. Call $%
%TCIMACRO{\U{211d} }%
%BeginExpansion
\mathbb{R}
%EndExpansion
\times\sigma$ a \emph{generalized sheet} and attach to it the obvious system
of Euclidean local coordinates, where $\mathbf{v=}\left(  0,v\right)  $ plays
the role of the origin. Note that:

\begin{itemize}
\item if $v_{0}$ lies on $\sigma$, then $%
%TCIMACRO{\U{211d} }%
%BeginExpansion
\mathbb{R}
%EndExpansion
\times\sigma$ contains the sheet $%
%TCIMACRO{\U{211d} }%
%BeginExpansion
\mathbb{R}
%EndExpansion
\times\sigma^{\prime}$ where $\sigma^{\prime}\subseteq\sigma$ is the subray
beginning at $v_{0}$; and

\item if $v_{0}\notin\sigma$, there is an edge path ray $\sigma^{\prime}$
emanating from $v_{0}$ and containing $\sigma$ as a subray, in which case the
sheet $%
%TCIMACRO{\U{211d} }%
%BeginExpansion
\mathbb{R}
%EndExpansion
\times\sigma^{\prime}$ contains $%
%TCIMACRO{\U{211d} }%
%BeginExpansion
\mathbb{R}
%EndExpansion
\times\sigma$.
\end{itemize}

\noindent In each of the above cases, the edges of half-planes $%
%TCIMACRO{\U{211d} }%
%BeginExpansion
\mathbb{R}
%EndExpansion
\times\sigma$ and $%
%TCIMACRO{\U{211d} }%
%BeginExpansion
\mathbb{R}
%EndExpansion
\times\sigma^{\prime}$ cobound a Euclidean strip in the larger of the two
sets. As a result, a ray in $%
%TCIMACRO{\U{211d} }%
%BeginExpansion
\mathbb{R}
%EndExpansion
\times\sigma$ emanating from an arbitrary edge point $\left(  x,v\right)  $ at
an angle $\theta$ with $[x,\infty)\times\left\{  v\right\}  $ is asymptotic in
$%
%TCIMACRO{\U{211d} }%
%BeginExpansion
\mathbb{R}
%EndExpansion
\times T\left(  \left\vert m\right\vert ,n\right)  )$ to the ray in $%
%TCIMACRO{\U{211d} }%
%BeginExpansion
\mathbb{R}
%EndExpansion
\times\sigma^{\prime}$ emanating from $\mathbf{v}_{0}$ and forming the same
angle with $[0,\infty)\times v_{0}$. As such both rays represent the same
element of $S^{0}\ast\partial_{\infty}T\left(  m,n\right)  $, the point on the
the semicircle $C_{\sigma^{\prime}}$ with parameter $\theta$.

\begin{proof}
[Proof of Proposition \ref{Proposition: Extension of action to boundary (m>0)}
]In this proof we allow $s$ and $t$ to represent the isometries generating the
action of $BS(m,n)$ on the Bass-Serre tree $T\left(  \left\vert m\right\vert
,n\right)  )$ as well as the extensions of those isometries to the visual
compactification of $T\left(  \left\vert m\right\vert ,n\right)  )$. We use
the same symbols to denote the homeomorphisms generating the standard
$BS\left(  m,n\right)  $-action on $%
%TCIMACRO{\U{211d} }%
%BeginExpansion
\mathbb{R}
%EndExpansion
\times T\left(  \left\vert m\right\vert ,n\right)  )$, as described in Section
\ref{Subsection: standard action}$\footnote{This notation is reasonable since
the isometries $s,t:T\left(  m,n\right)  \rightarrow T\left(  m,n\right)  $
are precisely the $T\left(  m,n\right)  $-coordinate functions of the
corresponding self-homeomorphisms of $%
%TCIMACRO{\U{211d} }%
%BeginExpansion
\mathbb{R}
%EndExpansion
\times T\left(  m,n\right)  $.}$. It will be useful to have formulaic
representations of these functions.

As an isometry of $T\left(  \left\vert m\right\vert ,n\right)  )$, $s$ fixes
$v_{0}$, but permutes the collection of rays emanating from that vertex. As a
self-homeomorphism of $%
%TCIMACRO{\U{211d} }%
%BeginExpansion
\mathbb{R}
%EndExpansion
\times T\left(  \left\vert m\right\vert ,n\right)  )$, the action of $s$ on
the $%
%TCIMACRO{\U{211d} }%
%BeginExpansion
\mathbb{R}
%EndExpansion
$-coordinate is translation by $1/n$. So, if $%
%TCIMACRO{\U{211d} }%
%BeginExpansion
\mathbb{R}
%EndExpansion
\times\rho$ is an arbitrary sheet and $\rho^{\prime}$ is the image of $\rho$
under $s$ in the Bass-Serre tree, then, as a homeomorphism of $%
%TCIMACRO{\U{211d} }%
%BeginExpansion
\mathbb{R}
%EndExpansion
\times T\left(  \left\vert m\right\vert ,n\right)  )$, $s$ takes points of $%
%TCIMACRO{\U{211d} }%
%BeginExpansion
\mathbb{R}
%EndExpansion
\times\rho$ with local coordinates $\left(  x,d\right)  $ to points of $%
%TCIMACRO{\U{211d} }%
%BeginExpansion
\mathbb{R}
%EndExpansion
\times\rho^{\prime}$ with local coordinates $\left(  x+\frac{1}{n},d\right)  $.

As an isometry of $T\left(  \left\vert m\right\vert ,n\right)  )$, $t$ sends
$v_{0}$ to a vertex $v_{1}$, one unit away; and as a self-homeomorphism of $%
%TCIMACRO{\U{211d} }%
%BeginExpansion
\mathbb{R}
%EndExpansion
\times T\left(  \left\vert m\right\vert ,n\right)  )$, the action of $t$ on
the $%
%TCIMACRO{\U{211d} }%
%BeginExpansion
\mathbb{R}
%EndExpansion
$-coordinate is multiplication by $n/m$. So, if $%
%TCIMACRO{\U{211d} }%
%BeginExpansion
\mathbb{R}
%EndExpansion
\times\rho$ is an arbitrary sheet and $\rho^{\prime}$ is its image under $t$
in the Bass-Serre tree, $s$ takes points of $%
%TCIMACRO{\U{211d} }%
%BeginExpansion
\mathbb{R}
%EndExpansion
\times\rho$ with local coordinates $\left(  x,d\right)  $ to points of $%
%TCIMACRO{\U{211d} }%
%BeginExpansion
\mathbb{R}
%EndExpansion
\times\rho^{\prime}$ with local coordinates $\left(  \left(  \frac{n}%
{m}\right)  x,d\right)  $.

Now consider the homeomorphisms $\overline{s}=f\circ s\circ f^{-1}$ and
$\overline{t}=f\circ t\circ f^{-1}$ which generate the $\overline{B\left(
m,n\right)  }$-action on $%
%TCIMACRO{\U{211d} }%
%BeginExpansion
\mathbb{R}
%EndExpansion
\times T\left(  \left\vert m\right\vert ,n\right)  )$. Since the suspension of
a composition is the composition of the suspensions, it is enough to verify
the proposition for these two elements. Recall that $f$ and $f^{-1}$ are given
by the formulas%
\begin{align*}
&  \left(  x,y\right)  \overset{f}{\longrightarrow}\left(  \operatorname{sgn}%
\left(  x\right)  \log\left(  \log\left(  \left\vert x\right\vert +e\right)
\right)  ,y\right)  \text{,\hspace{0.15in}and}\\
&  \left(  x,y\right)  \overset{f^{-1}}{\longrightarrow}\left(
\operatorname{sgn}\left(  x\right)  (\exp\left(  \exp\left(  \left\vert
x\right\vert \right)  \right)  -e\right)  ,y)
\end{align*}

Let $%
%TCIMACRO{\U{211d} }%
%BeginExpansion
\mathbb{R}
%EndExpansion
\times\rho$ be an arbitrary sheet, and for $p,q\in%
%TCIMACRO{\U{2124} }%
%BeginExpansion
\mathbb{Z}
%EndExpansion
$ with $p\geq0$, let $\overrightarrow{\mathbf{r}_{\frac{p}{q}}}=\left\{
\left(  qx,px\right)  \mid x\in%
%TCIMACRO{\U{211d} }%
%BeginExpansion
\mathbb{R}
%EndExpansion
^{+}\right\}  $, i.e., $\overrightarrow{\mathbf{r}_{\frac{p}{q}}}$ is the ray
in $%
%TCIMACRO{\U{211d} }%
%BeginExpansion
\mathbb{R}
%EndExpansion
\times\rho$ with slope $p/q$. If $\rho^{\prime}$ is the image of $\rho$ under
$s$ in the Bass-Serre tree, then $\overline{s}$ takes $%
%TCIMACRO{\U{211d} }%
%BeginExpansion
\mathbb{R}
%EndExpansion
\times\rho$ onto $%
%TCIMACRO{\U{211d} }%
%BeginExpansion
\mathbb{R}
%EndExpansion
\times\rho^{\prime}$ and the image of $\overrightarrow{\mathbf{r}_{\frac{p}%
{q}}}$ is the set of points with local coordinates
\begin{equation}
\{(\delta_{q,x,n}\cdot\log(\log(\exp\left(  \exp\left(  \left\vert
q\right\vert x\right)  \right)  +\frac{1}{n})),px)\mid x\in%
%TCIMACRO{\U{211d} }%
%BeginExpansion
\mathbb{R}
%EndExpansion
^{+}\},\label{formula: topological ray 1}%
\end{equation}
where $\delta_{q,x,n}=\pm1$ is a small variation on $\operatorname{sgn}\left(
q\right)  $. Specifically,
\[
\delta_{q,x,n}=\operatorname{sgn}(\operatorname{sgn}\left(  q\right)
\log(\log(x+e))+\frac{1}{n})
\]
which is identical to $\operatorname{sgn}\left(  q\right)  $ except when
$\log\left(  \log\left(  x+e\right)  \right)  <\frac{1}{n}$ and $q<0$. Most
importantly, the image of $\overrightarrow{\mathbf{r}_{\frac{p}{q}}}$ under
$\overline{s}$ is a topologically embedded (non-geodesic) ray in $%
%TCIMACRO{\U{211d} }%
%BeginExpansion
\mathbb{R}
%EndExpansion
\times\rho^{\prime}$ which, in local coordinates, emanates from $\left(
\log\log\left(  e+\frac{1}{n}\right)  ,0\right)  $ and is asymptotic to
geodesic rays in $%
%TCIMACRO{\U{211d} }%
%BeginExpansion
\mathbb{R}
%EndExpansion
\times\rho^{\prime}$ with slope $p/q$. That is easily seen by letting $x$
approach infinity in formula (\ref{formula: topological ray 1}). From this it
can be seen that the restriction of $\overline{s}$ taking $%
%TCIMACRO{\U{211d} }%
%BeginExpansion
\mathbb{R}
%EndExpansion
\times\rho$ onto $%
%TCIMACRO{\U{211d} }%
%BeginExpansion
\mathbb{R}
%EndExpansion
\times\rho^{\prime}$ extends to the visual boundaries of these half-planes by
taking $C_{\rho}$ onto $C_{\rho^{\prime}}$ in a parameter preserving manner.
Since this is true for each sheet, it follows that the suspension of the
homeomorphism $s:\partial_{\infty}T\left(  \left\vert m\right\vert ,n\right)
\rightarrow\partial_{\infty}T\left(  \left\vert m\right\vert ,n\right)  $
extends $\overline{s}:%
%TCIMACRO{\U{211d} }%
%BeginExpansion
\mathbb{R}
%EndExpansion
\times T\left(  \left\vert m\right\vert ,n\right)  )\rightarrow%
%TCIMACRO{\U{211d} }%
%BeginExpansion
\mathbb{R}
%EndExpansion
\times T\left(  \left\vert m\right\vert ,n\right)  )$ over the visual boundary.

Next consider the homeomorphism $\overline{t}$. Again let
$\overrightarrow{\mathbf{r}_{\frac{p}{q}}}$ be a ray (as described above) in
an arbitrary sheet $%
%TCIMACRO{\U{211d} }%
%BeginExpansion
\mathbb{R}
%EndExpansion
\times\rho$ and let $\rho^{\prime}$ be the $t$-image of $\rho$ under the
action on $T\left(  \left\vert m\right\vert ,n\right)  $. In local
coordinates, the image of $\overrightarrow{\mathbf{r}_{\frac{p}{q}}}$ is the
set of points in $%
%TCIMACRO{\U{211d} }%
%BeginExpansion
\mathbb{R}
%EndExpansion
\times\rho^{\prime}$ with local coordinates
\[
\{(\operatorname{sgn}\left(  q\right)  \log\left(  \log\left(  \frac{n}{m}%
\exp\left(  \exp\left(  \left\vert q\right\vert x\right)  \right)
+(\frac{m-n}{m})\cdot e\right)  \right)  ,px)\mid x\in%
%TCIMACRO{\U{211d} }%
%BeginExpansion
\mathbb{R}
%EndExpansion
^{+}\}.
\]

Consider now the ratios of the coordinates of these points as $x$ gets large,
i.e.,
\[
\operatorname{sgn}\left(  q\right)  \cdot\lim_{x\rightarrow\infty}\frac
{px}{\log\left(  \log\left(  \frac{n}{m}\exp\left(  \exp\left(  \left\vert
q\right\vert x\right)  \right)  +(\frac{m-n}{m})\cdot e\right)  \right)  }%
\]
By another elementary but messy calculation involving L'H\^{o}pital's Rule,
this limit is $p/q$. As such, the image of $\overrightarrow{\mathbf{r}%
_{\frac{p}{q}}}$ under $\overline{t}$ is a topologically embedded
(non-geodesic) ray in $%
%TCIMACRO{\U{211d} }%
%BeginExpansion
\mathbb{R}
%EndExpansion
\times\rho^{\prime}$ emanating (in local coordinates) from $\left(
0,0\right)  $ and asymptotic to rays in $%
%TCIMACRO{\U{211d} }%
%BeginExpansion
\mathbb{R}
%EndExpansion
\times\rho^{\prime}$ with slope $p/q$. As before, the restriction of
$\overline{t}$ taking $%
%TCIMACRO{\U{211d} }%
%BeginExpansion
\mathbb{R}
%EndExpansion
\times\rho$ onto $%
%TCIMACRO{\U{211d} }%
%BeginExpansion
\mathbb{R}
%EndExpansion
\times\rho^{\prime}$ extends to the visual boundaries of these half-planes by
taking $C_{\rho}$ onto $C_{\rho^{\prime}}$ in a parameter preserving manner.
And since this is true for all sheets, the suspension of $t:\partial_{\infty
}T\left(  \left\vert m\right\vert ,n\right)  \rightarrow\partial_{\infty
}T\left(  \left\vert m\right\vert ,n\right)  $ extends $\overline{t}:%
%TCIMACRO{\U{211d} }%
%BeginExpansion
\mathbb{R}
%EndExpansion
\times T\left(  \left\vert m\right\vert ,n\right)  )\rightarrow%
%TCIMACRO{\U{211d} }%
%BeginExpansion
\mathbb{R}
%EndExpansion
\times T\left(  \left\vert m\right\vert ,n\right)  )$ over the visual boundary.
\end{proof}

To complete Theorem \ref{Theorem: Boundary BS(m,n)}, we need an analog of
Proposition \ref{Proposition: Extension of action to boundary (m>0)} for
$m<0$. In those cases, we cannot simply suspend the $BS\left(  m,n\right)
$-action on $\partial_{\infty}T\left(  \left\vert m\right\vert ,n\right)  $ to
get the appropriate extension of the $\overline{BS(m,n)}$-action on $%
%TCIMACRO{\U{211d} }%
%BeginExpansion
\mathbb{R}
%EndExpansion
\times T\left(  \left\vert m\right\vert ,n\right)  )$. That is because
homeomorphisms $t$ and $\overline{t}$ now flip the orientation of the $%
%TCIMACRO{\U{211d} }%
%BeginExpansion
\mathbb{R}
%EndExpansion
$-factor. More precisely, if $r:%
%TCIMACRO{\U{211d} }%
%BeginExpansion
\mathbb{R}
%EndExpansion
\times T\left(  \left\vert m\right\vert ,n\right)  )\rightarrow%
%TCIMACRO{\U{211d} }%
%BeginExpansion
\mathbb{R}
%EndExpansion
\times T\left(  \left\vert m\right\vert ,n\right)  )$ is the reflection
homeomorphism taking $\left(  x,y\right)  $ to $\left(  -x,y\right)  $, then
$t$ and $\overline{t}$ are the homeomorphisms $r\circ t^{\prime}$ and
$r\circ\overline{t^{\prime}}$, where $t^{\prime}$ and $\overline{t^{\prime}}$
are the homeomorphisms studied earlier in cases where $m>0$. Obviously, if
$\overline{t^{\prime}}$ extends to the visual boundary of $%
%TCIMACRO{\U{211d} }%
%BeginExpansion
\mathbb{R}
%EndExpansion
\times T\left(  \left\vert m\right\vert ,n\right)  )$ by suspending the
corresponding homeomorphism of $\partial_{\infty}T\left(  \left\vert
m\right\vert ,n\right)  )$, then $\overline{t}$ extends to the visual boundary
of $%
%TCIMACRO{\U{211d} }%
%BeginExpansion
\mathbb{R}
%EndExpansion
\times T\left(  \left\vert m\right\vert ,n\right)  )$ via the reflected
suspension of that same homeomorphism. By contrast, the homeomorphisms $s$ and
$\overline{s}$ are no different when $m<0$ than they are when $m>0$.

For $m<0$ define $\phi:BS\left(  m,n\right)  \rightarrow%
%TCIMACRO{\U{2124} }%
%BeginExpansion
\mathbb{Z}
%EndExpansion
$ to be the quotient map obtained by modding out by the normal closure of the
subgroup $\left\langle s\right\rangle $. Then, for an action of $BS\left(
m,n\right)  $ on $\partial_{\infty}T\left(  \left\vert m\right\vert ,n\right)
)$, define the corresponding $t$\emph{-reflected action} of $BS\left(
m,n\right)  $ on $S^{0}\ast\partial_{\infty}T\left(  \left\vert m\right\vert
,n\right)  )$ as follows:

\begin{itemize}
\item if $\phi\left(  g\right)  $ is even, then $g:S^{0}\ast\partial_{\infty
}T\left(  \left\vert m\right\vert ,n\right)  )\rightarrow S^{0}\ast
\partial_{\infty}T\left(  \left\vert m\right\vert ,n\right)  )$ is the
suspension of $g:\partial_{\infty}T\left(  \left\vert m\right\vert ,n\right)
)\rightarrow\partial_{\infty}T\left(  \left\vert m\right\vert ,n\right)  )$, and

\item if $\phi\left(  g\right)  $ is odd, then $g:S^{0}\ast\partial_{\infty
}T\left(  \left\vert m\right\vert ,n\right)  )\rightarrow S^{0}\ast
\partial_{\infty}T\left(  \left\vert m\right\vert ,n\right)  )$ is the
reflected suspension of $g:\partial_{\infty}T\left(  \left\vert m\right\vert
,n\right)  )\rightarrow\partial_{\infty}T\left(  \left\vert m\right\vert
,n\right)  )$.
\end{itemize}

\noindent Proof of the following is now essentially the same as Proposition
\ref{Proposition: Extension of action to boundary (m>0)}.

\begin{proposition}
For $m<0$, the $t$-reflected suspension of the $BS(m,n)$-action on
$\partial_{\infty}T\left(  \left\vert m\right\vert ,n\right)  $ extends the
$\overline{BS(m,n)}$-action on $%
%TCIMACRO{\U{211d} }%
%BeginExpansion
\mathbb{R}
%EndExpansion
\times T\left(  \left\vert m\right\vert ,n\right)  )$.
\end{proposition}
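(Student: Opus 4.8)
The plan is to reduce everything to the generator-by-generator analysis already carried out for $m>0$ in Proposition \ref{Proposition: Extension of action to boundary (m>0)}, and then to bookkeep how the orientation-reversal of the $\mathbb{R}$-factor propagates through words in the generators. For a homeomorphism $h$ of $\partial_{\infty}T(|m|,n)$ write $\widehat{h}$ for its suspension, and for a tree-isometry $g$ write $g|_{\partial}$ for the induced homeomorphism of $\partial_{\infty}T(|m|,n)$. First I would record the two facts that make the reduction work. Since the generators $s$ and $\overline{s}$ are the same homeomorphisms whether $m>0$ or $m<0$, the argument of Proposition \ref{Proposition: Extension of action to boundary (m>0)} applies verbatim to show that $\overline{s}$ extends over the visual boundary via $\widehat{s|_{\partial}}$. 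For $t$, I would use the decomposition $t=r\circ t'$ from the discussion preceding the statement, together with the elementary identity $f\circ r=r\circ f$ (both sides send $(x,y)$ to $(-\operatorname{sgn}(x)\log(\log(|x|+e)),y)$); this gives $\overline{r}=frf^{-1}=r$ and hence $\overline{t}=r\circ\overline{t'}$.

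Next I would identify the boundary extension of the reflection $r$. Being an isometry of the CAT(0) space $\mathbb{R}\times T(|m|,n)$, $r$ extends continuously to the visual boundary; on each sheet it is the reflection across $\{0\}\times\rho$, so on $S^{0}\ast\partial_{\infty}T(|m|,n)$ its extension is the homeomorphism $j$ that fixes each great semicircle $C_{\rho}$ setwise, sends the parameter $\theta$ to $\pi-\theta$, and accordingly swaps the suspension points $\mathbf{R}$ and $\mathbf{L}$. Thus $j$ is exactly the reflection built into the definition of reflected suspension, so the reflected suspension of $h$ is $j\circ\widehat{h}$. Two properties of $j$ will drive the bookkeeping: $j^{2}=\operatorname{id}$, and $j$ commutes with every suspension $\widehat{h}$ (both compositions carry $C_{\rho}$ to $C_{h(\rho)}$ with parameter $\theta\mapsto\pi-\theta$). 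Because $r$ fixes the $T(|m|,n)$-coordinate, $t$ and $t'$ induce the same homeomorphism of $\partial_{\infty}T(|m|,n)$, so Proposition \ref{Proposition: Extension of action to boundary (m>0)} gives that $\overline{t'}$ extends via $\widehat{t|_{\partial}}$. Combined with $\overline{t}=r\circ\overline{t'}$, the boundary extension of $\overline{t}$ is $j\circ\widehat{t|_{\partial}}$, i.e.\ the reflected suspension of $t|_{\partial}$; the same computation handles $\overline{t}^{\,-1}$.

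Finally I would assemble an arbitrary group element. Boundary extensions of homeomorphisms of $\mathbb{R}\times T(|m|,n)$ compose, so for a word $g=x_{1}\cdots x_{k}$ in $s^{\pm1},t^{\pm1}$ the extension of $\overline{g}$ is the product of the extensions of its generator-factors. Each factor contributes $\widehat{x_{i}|_{\partial}}$, preceded by a copy of $j$ exactly when $x_{i}\in\{t,t^{-1}\}$. Using $j^{2}=\operatorname{id}$, the commutation of $j$ with suspensions, and the multiplicativity of suspension (the suspension of a composition is the composition of suspensions), I would slide every $j$ to the front and collapse the suspensions, obtaining $j^{N}\circ\widehat{g|_{\partial}}$, where $N$ is the number of $t^{\pm1}$-letters. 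Since $\phi$ kills $s$ and sends $t\mapsto1$, the parity of $N$ equals $\phi(g)\bmod 2$; hence the extension equals $\widehat{g|_{\partial}}$ when $\phi(g)$ is even and $j\circ\widehat{g|_{\partial}}$ when $\phi(g)$ is odd. This is precisely the $t$-reflected suspension, and well-definedness of that action is automatic since boundary extensions are unique and compose.

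The hard part is not any single computation---the analytic content, that these non-geodesic image rays are asymptotic to geodesic rays of the correct slope, is inherited unchanged from the $m>0$ case---but rather the consistency of the parity bookkeeping: one must be certain that collecting reflections through the suspensions records exactly $\phi(g)\bmod 2$ and is independent of the word chosen to represent $g$. The relations $j\,\widehat{h}=\widehat{h}\,j$ and $j^{2}=\operatorname{id}$, together with the fact that $\phi$ is a homomorphism, are what guarantee this, so that the piecewise definition of the $t$-reflected action (by the parity of $\phi$) genuinely coincides with the continuous extension of the $\overline{BS(m,n)}$-action.
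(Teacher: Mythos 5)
Your proposal is correct and follows essentially the same route the paper intends: the paper's proof is the one-line remark that the argument is ``essentially the same'' as the $m>0$ proposition, relying on exactly the framework you use --- the decomposition $t=r\circ t'$, the fact that $s$ is unchanged, the reflected suspension as the extension of $\overline{t}$, and the $\phi$-parity definition of the $t$-reflected action. Your version merely writes out the bookkeeping the paper leaves implicit (the identities $f\circ r=r\circ f$, $j^{2}=\operatorname{id}$, the commutation of $j$ with suspensions, and the check that the number of $t^{\pm 1}$-letters has the same parity as $\phi(g)$), all of which is accurate.
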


\begin{remark}
The argument by which $\mathcal{Z}$-structures for generalized
Baumslag-Solitar groups were obtained from the existence of $\mathcal{Z}%
$-structures on ordinary Baumslag-Solitar groups does not extend to
$\mathcal{EZ}$-structures. That is because equivariance can be lost when
applying Proposition \ref{Boundary Swapping Theorem}. We leave the issue of
$\mathcal{EZ}$-structures for generalized Baumslag-Solitar groups for later.
\end{remark}

\bibliographystyle{amsalpha}
\bibliography{Biblio}

\providecommand{\bysame}{\leavevmode\hbox to3em{\hrulefill}\thinspace}
\providecommand{\MR}{\relax\ifhmode\unskip\space\fi MR }
% \MRhref is called by the amsart/book/proc definition of \MR.
\providecommand{\MRhref}[2]{%
  \href{http://www.ams.org/mathscinet-getitem?mr=#1}{#2}
}
\providecommand{\href}[2]{#2}
\begin{thebibliography}{AMN11}

\bibitem[AMN11]{AMN11}
H.~Abels, A.~Manoussos, and G.~Noskov, \emph{Proper actions and proper
  invariant metrics}, J. Lond. Math. Soc. (2) \textbf{83} (2011), no.~3,
  619--636. \MR{2802502}

\bibitem[Bes96]{Bes96}
Mladen Bestvina, \emph{Local homology properties of boundaries of groups},
  Michigan Math. J. \textbf{43} (1996), no.~1, 123--139. \MR{1381603}

\bibitem[BH99]{BrHa99}
M.~Bridson and A.~Haefliger, \emph{Metric spaces of non-positive curvature},
  Grundlehren der Mathematischen Wissenschaften [Fundamental Principles of
  Mathematical Sciences], vol. 319, Springer-Verlag, Berlin, 1999.

\bibitem[BM91]{BeMe91}
Mladen Bestvina and Geoffrey Mess, \emph{The boundary of negatively curved
  groups}, J. Amer. Math. Soc. \textbf{4} (1991), no.~3, 469--481. \MR{1096169}

\bibitem[Dah03]{Dah03}
F.~Dahmani, \emph{Classifying spaces and boundaries for relatively hyperbolic
  groups}, Proc. London Math Soc. \textbf{86} (2003), no.~3, 666--684.

\bibitem[Dra06]{Dra06}
A.~Dranishnikov, \emph{On {B}estvina-{M}ess formula}, Contemporary Mathematics
  \textbf{394} (2006), no.~1, 77�--85.

\bibitem[FL05]{FaLa05}
F.~T. Farrell and J.-F. Lafont, \emph{E{Z}-structures and topological
  applications}, Comment. Math. Helv. \textbf{80} (2005), no.~1, 103--121.
  \MR{2130569}

\bibitem[Ger92]{Ger92}
S.~M. Gersten, \emph{Dehn functions and {$l_1$}-norms of finite presentations},
  Algorithms and classification in combinatorial group theory ({B}erkeley,
  {CA}, 1989), Math. Sci. Res. Inst. Publ., vol.~23, Springer, New York, 1992,
  pp.~195--224. \MR{1230635}

\bibitem[GM18]{GuMo18}
Craig~R. Guilbault and Molly~A. Moran, \emph{Proper homotopy types and
  z-boundaries of spaces admitting geometric group actions}, Expositiones
  Mathematicae (2018).

\bibitem[Mar14]{Mar14}
A.~Martin, \emph{Non-positively curved complexes of groups and boundaries},
  Geom. Topol. \textbf{18} (2014), no.~1, 31--102.

\bibitem[OP09]{OsPr09}
D.~Osajda and P.~Przytycki, \emph{Boundaries of systolic groups}, Geom. Topol.
  \textbf{13} (2009), no.~5, 2807�--2880.

\bibitem[Pie18]{Pie18}
Brian Pietsch, \emph{Z-structures and semidirect products with an infinite
  cyclic group}, Thesis (Ph.D.)--The University of Wisconsin - Milwaukee, 2018.

\bibitem[Tir11]{Tir11}
C.~Tirel, \emph{Z-structures on product groups}, Algebr. Geom. Topol.
  \textbf{11} (2011), no.~5, 2587--2625.

\bibitem[Why01]{Why01}
K.~Whyte, \emph{The large scale geometry of the higher {B}aumslag-{S}olitar
  groups}, Geom. Funct. Anal. \textbf{11} (2001), no.~6, 1327--1343.
  \MR{1878322}

\end{thebibliography}

\end{document}